\documentclass[11pt, twoside, leqno]{article}

\usepackage{amssymb}
\usepackage{amsmath}
\usepackage{amsthm}
\usepackage{color}
\usepackage{mathrsfs}
\usepackage{txfonts}

\usepackage{indentfirst}

\allowdisplaybreaks

\pagestyle{myheadings}\markboth{\footnotesize\rm\sc
Xuelian Bao, Hui Zhang}
{\footnotesize\rm\sc Numerical Approximations of the Cahn-Hilliard Equation with Dynamic
Boundary Conditions}

\textwidth=15cm
\textheight=21cm
\oddsidemargin 0.35cm
\evensidemargin 0.35cm

\parindent=13pt

\newtheorem{theorem}{Theorem}[section]
\newtheorem{lemma}[theorem]{Lemma}

\theoremstyle{definition}
\newtheorem{remark}[theorem]{Remark}

\renewcommand{\appendix}{\par
   \setcounter{section}{0}%
   \setcounter{subsection}{0}%
   \setcounter{subsubsection}{0}%
   \gdef\thesection{\@Alph\c@section}%
   \gdef\thesubsection{\@Alph\c@section.\@arabic\c@subsection}%
   \gdef\theHsection{\@Alph\c@section.}%
   \gdef\theHsubsection{\@Alph\c@section.\@arabic\c@subsection}%
   \csname appendixmore\endcsname
 }

\numberwithin{equation}{section}

\usepackage{graphicx} 

\begin{document}

\arraycolsep=1pt

\title{\bf\Large Numerical Approximations and Error Analysis of the Cahn-Hilliard Equation with Dynamic Boundary Conditions
\footnotetext{\hspace{-0.35cm} 2010 {\it
Mathematics Subject Classification}. 65M12; 65M06; 65N12; 65M22.
\endgraf {\it Key words and phrases.}
Cahn-Hilliard equation; Dynamic boundary conditions; Error estimates; Linear numerical
scheme; Energy stability.
}}
\author{Xuelian Bao\footnote{Corresponding author, School of Mathematical Sciences, Beijing Normal University, Beijing 100875, China (e-mail: xlbao@mail.bnu.edu.cn).}, Hui Zhang\footnote{Laboratory of Mathematics and Complex Systems, Ministry of Education and School of Mathematical Sciences, Beijing Normal University, Beijing 100875, China.}}
\date{}
\maketitle

\vspace{-0.8cm}

\begin{center}
\begin{minipage}{13cm}
{\small {\bf Abstract}\quad
We consider the numerical approximations of the Cahn-Hilliard equation with dynamic boundary conditions (C. Liu et. al., Arch. Rational Mech. Anal., 2019). We propose a first-order in time, linear and energy stable numerical scheme, which is based on the stabilized linearly implicit approach. The energy stability of the scheme is proved and the semi-discrete-in-time error estimates are carried out. Numerical experiments, including the comparison with the former work, the accuracy tests with respect to the time step size and the shape deformation of a droplet, are performed to validate the accuracy and the stability of the proposed scheme.
}
\end{minipage}
\end{center}

\section{Introduction}\label{s1}

The Cahn-Hilliard equation is one of the most fundamental models which describe the phase separation processes of binary mixtures. The classical Cahn-Hilliard equation, first introduced in \cite{CH1958}, can be written as follows:
\begin{equation}\label{CH}
\left\{
\begin{aligned}
&\phi_t=\Delta\mu, &\mbox{in}\  \Omega\times(0,T),\\
&\mu=-\varepsilon\Delta\phi+\frac{1}{\varepsilon}F'(\phi), &\mbox{in}\ \Omega\times(0,T),
\end{aligned}
\right.
\end{equation}
where $T\in (0,\infty)$ and $\Omega\subseteq\mathbb{R}^{d}$ ($d=2,3$) is a bounded domain with the smooth boundary $\Gamma=\partial\Omega$. The phase-field order parameter $\phi$
represents the difference of two local relative concentrations to describe the binary mixtures. In the domain $\Omega$, $\phi=\pm1$ correspond to the pure phases of the materials, which are separated by a interfacial region whose thickness is proportional to the parameter $\varepsilon$. $\mu$ represents the chemical potential in $\Omega$, which can be expressed as the Fr\'{e}chet derivative of the bulk free energy:
\begin{equation}\label{Ebulk}
E^{bulk}(\phi)=\int_{\Omega}\frac{\varepsilon}2|\nabla\phi|^2+\frac{1}{\varepsilon}F(\phi)\mbox{d}x,
\end{equation}
where $F$ stands for the bulk potential, which usually has a double-well structure with two minima at -1 and 1 and a local unstable maximum at 0. A classical choice is the regular double-well potential
\begin{equation}\label{classicalF}
F(x)=\frac{1}4(x^2-1)^2, \qquad x\in \mathbb{R}.
\end{equation}

When the time-evolution of $\phi$ is confined in a bounded domain, suitable boundary conditions should be considered for the system \eqref{CH}.
The homogeneous Neumann conditions are the classical boundary conditions:
\begin{equation}\label{BCmu}
\partial_{\mathbf{n}}\mu=0, \quad \mbox{on}\  \Gamma\times(0,T),
\end{equation}
\begin{equation}\label{BCphi}
\partial_{\mathbf{n}}\phi=0, \quad \mbox{on}\  \Gamma\times(0,T),
\end{equation}
where $\mathbf{n}=\mathbf{n}(\mathbf{x})$ denotes the unit outer normal vector and $\partial_{\mathbf{n}}$ denotes the outward normal derivative on $\Gamma$.
The Cahn-Hilliard equation with the boundary conditions \eqref{BCmu} and \eqref{BCphi} can be viewed as an $H^{-1}$-gradient flow of the bulk free energy.

The no-flux boundary condition \eqref{BCmu} guarantees the conservation of mass in the
bulk (i.e., in $\Omega$):
\begin{equation}
\int_{\Omega}\phi(t) \mbox{d}x=\int_{\Omega}\phi(0) \mbox{d}x, \quad t\in[0,T].
\end{equation}
Moreover, the boundary conditions \eqref{BCmu} and \eqref{BCphi} imply that the bulk
free energy $E^{bulk}$ (Eq. \eqref{Ebulk}) is decreasing with respect to time, namely,
\begin{equation}
\frac{d}{dt} E^{bulk}(\phi(t))+\int_{\Omega}|\nabla\mu|^2 \mbox{d}x=0, \quad t\in(0,T).
\end{equation}

However, the Cahn-Hilliard equation with homogeneous Neumann conditions neglects the effects of the boundary to the bulk dynamics. Thus, it is not suitable for some applications (for instance, hydrodynamic applications such as contact line problems).
In order to describe the effective interactions between the solid wall and the binary mixture, physicists added the suitable surface free energy functional into the system \cite{fischer1997, fischer1998, Kenzler2001}:
\begin{equation}\label{Etotal}
E^{total}(\phi)=E^{bulk}(\phi)+E^{surf}(\phi),
\end{equation}
\begin{equation}\label{Esurf}
E^{surf}(\phi)=\int_{\Gamma}\frac{\delta\kappa}2|\nabla_{\Gamma}\phi|^2+\frac{1}{\delta}G(\phi)\mbox{d}S,
\end{equation}
where $\nabla_{\Gamma}$ represents the tangential or surface gradient operator on $\Gamma$, $G$ is the surface potential, the parameter $\kappa$ is related to the surface diffusion and $\delta$ denotes the thickness of the interfacial region on $\Gamma$. When $\kappa=0$, it is related to the moving contact line problem \cite{Thompson1989}.
Recently, for the total free energy \eqref{Etotal}, various dynamic boundary conditions for the Cahn-Hilliard equation have been proposed and investigated, see for instance, see \cite{Kenzler2001, GMS2011, liuwu2019, knopf2019, knopf2020}, and references therein.

In the present work, the Cahn-Hilliard equation with the dynamic boundary conditions, which was derived by an energetic variational approach by Liu and Wu (Liu-Wu model, for short) \cite{liuwu2019}, is considered.
It reads as follows:
\begin{equation}\label{CHLW}
\left\{
\begin{aligned}
&\phi_t=\Delta\mu, &\mbox{in}\  \Omega\times(0,T),\\
&\mu=-\varepsilon\Delta\phi+\frac{1}{\varepsilon}F'(\phi), &\mbox{in}\ \Omega\times(0,T),\\
&\partial_{\mathbf{n}}\mu=0, &\mbox{on}\  \Gamma\times(0,T),\\
&\phi|_{\Gamma}=\psi, &\mbox{on}\  \Gamma\times(0,T),\\
&\psi_t=\Delta_{\Gamma}\mu_{\Gamma}, &\mbox{on}\  \Gamma\times(0,T),\\
&\mu_{\Gamma}=-\delta\kappa\Delta_{\Gamma}\psi+\frac{1}{\delta}G'(\psi)+\varepsilon\partial_{\mathbf{n}}\phi
&\mbox{on}\  \Gamma\times(0,T),
\end{aligned}
\right.
\end{equation}
where $\Delta_{\Gamma}$ denotes the Laplace-Beltrami operator on $\Gamma$.
The dynamic boundary conditions (with $\delta$, $\kappa>0$) turns out to be a surface Cahn-Hilliard type equation for the trace of $\phi$ on $\Gamma$, coupled with the bulk evolution in terms of $\partial_{\mathbf{n}}\phi$.
The existence and uniqueness of weak and strong solutions of the Liu-Wu model have been established in \cite{liuwu2019}. A different approach to construct the weak solutions of the Liu-Wu model is proposed in \cite{Garcke2020}.

The numerical approximations of the Cahn-Hilliard equation and its variants have been intensively investigated. The stabilized linearly implicit approach \cite{he2007, xu2020}, the approaches based on the convex-concave splitting \cite{shen2012, grun2013}, the invariant energy quadratization \cite{ieq1, ieq2} and
the scalar auxiliary variable (SAV) \cite{sav} method are efficient techniques for the time discretization.
Recently, there have been numerous contributions on the numerical approximation of the
Cahn-Hilliard equation with dynamic boundary conditions \cite{Cherfils2010,Cherfils2014,Israel2015,Fukao2017,bachelor}. For the numerical approximations of Liu-Wu model,
the first finite element scheme was proposed in \cite{bachelor} and the corresponding numerical results were presented in \cite{Garcke2020},
where the straightforward discretization based on piecewise linear finite element functions was utilized to simulate Liu-Wu model, and the corresponding nonlinear system was solved by Newton's method. A recent contribution on the numerical analysis for the Liu-Wu model can be found in \cite{Metzger2019}, where a different discrete scheme was proposed and the connection between $\phi$ and the chemical potentials was investigated.
However, the backward implicit Euler method was used for time discretization in the above discrete schemes, which lead to nonlinear systems at each time step.

In the present work, a first-order in time, linear and energy stable scheme for solving the Liu-Wu model is proposed based on the stabilized linearly implicit approach.
At each time step, one only needs to solve one linear equation and thus, the scheme is highly efficient. The energy stability of the scheme is proved and various numerical simulations in two-dimensional spaces are performed to validate the accuracy and stability of the scheme by comparing with the former work.
The error estimates in semi-discrete-in-time for the scheme are also carried out.
To the best of the authors' knowledge, the proposed scheme in this paper is the first linear and energy stable scheme for solving the Liu-Wu model and it is the first work to give the semi-discrete-in-time error estimates for the model.

The rest of the paper is organized as follows.
In Section 2, we first recall some notions and notation appearing in this article. In Section 3, a simple derivation of Liu-Wu model and the stabilized scheme with the energy stability are derived. In Section 4, we construct the error estimates. The accuracy tests and numerical examples are presented in Section 5. Finally, some concluding remarks are presented in Section 6.

\section{Preliminaries}\label{s2}

Before giving the stabilized scheme and the corresponding error analysis, we make some definitions in this section.
The norm and inner product of $L^2(\Omega)$ and $L^2(\Gamma)$ are denoted by $\|\cdot\|_{\Omega}$, $(\cdot,\cdot)_{\Omega}$ and $\|\cdot\|_{\Gamma}$, $(\cdot,\cdot)_{\Gamma}$ respectively. The usual norm in $H^k(\Omega)$ and $H^k(\Gamma)$ are denoted by $\|\cdot\|_{H^k(\Omega)}$ and $\|\cdot\|_{H^k(\Gamma)}$ respectively.

We consider a finite time interval $[0,T]$ and a domain $\Omega\subset\mathbb{R}^d$ ($d=2, 3$), which is a bounded domain with sufficient smooth boundary $\Gamma=\partial\Omega$ and $\mathbf{n}=\mathbf{n}(x)$ is the unit outer normal vector on $\Gamma$.

Let $\tau$ be the time step size. For a sequence of functions $f^0, f^1, \ldots, f^N$ in some Hilbert space $E$, we denote the sequence by $\{f_{\tau}\}$ and define the following discrete norm for $\{f_{\tau}\}$:
\begin{equation}\label{discretenorm}
\|f_{\tau}\|_{l^{\infty}(E)}=\max_{0\leq n\leq N}\bigg{(}\|f^n\|_E\bigg{)}.
\end{equation}
We denote by $C$ a generic constant that is independent of $\tau$ but possibly depends on the data and the solution, and use $f\lesssim g$ to say that there is a generic constant $C$ such that $f\leqslant C g$.

\section{Derivation of the Cahn-Hilliard equation with dynamic boundary conditions and its numerical scheme}\label{s3}

In this section, we first propose a simple derivation of the Liu-Wu model, indicating that it satisfies the energy dissipation law and mass conservation.

Since $\phi$ is the phase-field order parameter in the bulk, denote its trace $\psi:=\phi|_{\Gamma}$ as the order parameter on the boundary.
When the mass conservation holds true in the bulk $\Omega$ and on the boundary $\Gamma$ respectively, $\phi$ and $\psi$ satisfy the following continuity equations \cite{Forsterthesis}:
\begin{equation}\label{mass1}
\begin{aligned}
&\phi_t+\nabla\cdot(\phi\mathbf{u})=0, \qquad \mbox{in} \ \Omega\times(0,T),\\
&\psi_t+\nabla_{\Gamma}\cdot(\phi\mathbf{v})=0, \qquad \mbox{on}\  \Gamma\times(0,T),
\end{aligned}
\end{equation}
where $\mathbf{u}$ is the microscopic effective velocity and $\mathbf{v}$ is the microscopic effective tangential velocity field on the boundary.
Assume that there is no mass exchange between the bulk and the boundary, $\mathbf{u}$ satisfies the following boundary condition:
\begin{equation}
\mathbf{u}\cdot\mathbf{n}=0, \qquad \mbox{on}\  \Gamma\times(0,T).
\end{equation}
Since the boundary is closed, there is no need to impose any boundary condition
on $\mathbf{v}$.

For the total free energy \eqref{Etotal}, we consider the following energy dissipation law:
\begin{equation}\label{disslaw}
\frac{d}{dt} E^{total}(t)=-\mathcal{D}^{total}(t),
\end{equation}
which is based on the first and second laws of thermodynamics \cite{Hyon2010}. And the rate of energy dissipation $\mathcal{D}^{total}(t)$ also consists of two contributions from the bulk and the boundary, namely,
\begin{equation}\label{dtotal}
\mathcal{D}^{total}(t)=\mathcal{D}^{bulk}(t)+\mathcal{D}^{surf}(t).
\end{equation}
Assume that
\begin{equation}
\mathcal{D}^{bulk}(t)=\int_{\Omega} \phi^2 |\mathbf{u}|^2 \mbox{d}x, \qquad \mathcal{D}^{surf}(t)=\int_{\Gamma} \psi^2 |\mathbf{v}|^2 \mbox{d}S,
\end{equation}
and substitute \eqref{Ebulk} and \eqref{Esurf} into \eqref{disslaw}, we obtain
\begin{equation}\label{disslaw2}
\begin{aligned}
&\frac{d}{dt} \bigg{[} \int_{\Omega} \frac{\varepsilon}2|\nabla\phi|^2+\frac{1}{\varepsilon}F(\phi)\mbox{d}x+ \int_{\Gamma}\frac{\delta\kappa}2|\nabla_{\Gamma}\phi|^2+\frac{1}{\delta}G(\phi)\mbox{d}S\bigg{]}\\
&=-  \int_{\Omega} \phi^2 |\mathbf{u}|^2 \mbox{d}x- \int_{\Gamma} \psi^2 |\mathbf{v}|^2 \mbox{d}S.
\end{aligned}
\end{equation}
The left part of \eqref{disslaw2} can be written as
\begin{equation}\label{disslaw3}
\begin{aligned}
&\frac{d}{dt} \bigg{[} \int_{\Omega} \frac{\varepsilon}2|\nabla\phi|^2+\frac{1}{\varepsilon}F(\phi)\mbox{d}x+ \int_{\Gamma}\frac{\delta\kappa}2|\nabla_{\Gamma}\phi|^2+\frac{1}{\delta}G(\phi)\mbox{d}S\bigg{]}\\
&=\int_{\Omega} [-\varepsilon\Delta\phi+\frac{1}{\varepsilon}F'(\phi)]\phi_t \mbox{d}x
+\int_{\Gamma} [\varepsilon\partial_{\mathbf{n}}\phi-\delta\kappa\Delta_{\Gamma}\psi+\frac{1}{\delta}G'(\psi)]
\psi_t \mbox{d}S\\
&=\int_{\Omega} \phi \nabla[-\varepsilon\Delta\phi+\frac{1}{\varepsilon}F'(\phi)]\cdot\mathbf{u}\mbox{d}x
+\int_{\Gamma} \psi \nabla_{\Gamma} [\varepsilon\partial_{\mathbf{n}}\phi-\delta\kappa\Delta_{\Gamma}\psi+\frac{1}{\delta}G'(\psi)]
 \cdot\mathbf{v}\mbox{d}S\\
&=\int_{\Omega} \phi \nabla \mu\cdot\mathbf{u}\mbox{d}x+\int_{\Gamma} \psi \nabla_{\Gamma}
[\varepsilon\partial_{\mathbf{n}}\phi+\mu_s]\cdot\mathbf{v}\mbox{d}S.
\end{aligned}
\end{equation}
Here, $\mu$ and $\mu_s$ are the chemical potentials in $\Omega$ and on $\Gamma$, respectively, which can be expressed as the Fr\'{e}chet derivative of the bulk free energy $E^{bulk}$ and the surface free energy $E^{surf}$, namely,
\begin{equation}
\begin{aligned}
&\mu=\frac{\delta E^{bulk}}{\delta \phi}=-\varepsilon\Delta\phi+\frac{1}{\varepsilon}F'(\phi)\\
&\mu_s=\frac{\delta E^{surf}}{\delta \psi}=-\delta\kappa\Delta_{\Gamma}\psi+\frac{1}{\delta}G'(\psi).
\end{aligned}
\end{equation}
From \eqref{disslaw2} and \eqref{disslaw3}, we obtain
\begin{equation}\label{disslaw4}
\begin{aligned}
&\phi \mathbf{u}=-\nabla \mu,\\
&\psi\mathbf{v}=-\nabla_{\Gamma}(\varepsilon\partial_{\mathbf{n}}\phi+\mu_s).
\end{aligned}
\end{equation}
Substitute \eqref{disslaw4} into \eqref{mass1}, we obtain the Liu-Wu model (Eq. \eqref{CHLW}) with $\mu_{\Gamma}=\varepsilon\partial_{\mathbf{n}}\phi+\mu_s$.

Obviously, the Liu-Wu model satisfies the mass conservation law in the bulk and on the boundary:
\begin{equation}\label{massconservation}
\int_{\Omega}\phi(t) \mbox{d}x=\int_{\Omega}\phi(0) \mbox{d}x \quad \mbox{and} \quad \int_{\Gamma}\psi(t) \mbox{d}S=\int_{\Gamma}\psi(0) \mbox{d}S, \quad t\in[0,T],
\end{equation}
Moreover, from the energy dissipation law, it is easy to see that the total free energy $E^{total}(\phi,\psi)=E^{bulk}(\phi)+E^{surf}(\psi)$ is decreasing in time:
\begin{equation}\label{energy3}
\frac{\mbox{d}}{\mbox{d}t} E^{total}(\phi,\psi)=-\|\nabla\mu\|_{\Omega}^2
-\|\nabla_{\Gamma}\mu_{\Gamma}\|_{\Gamma}^2\leq 0.
\end{equation}

Now we present the stabilized scheme for the Cahn-Hilliard equation with dynamic boundary conditions (namely, \eqref{CHLW}).
The scheme can be written as follows,
\begin{eqnarray}\label{SIscheme1}
&&\frac{\phi^{n+1}-\phi^{n}}{\tau}=\Delta\mu^{n+1}, \quad \mbox{in}\  \Omega,\\
\label{SIscheme2}
&&\mu^{n+1}=-\varepsilon\Delta\phi^{n+1}+\frac{1}{\varepsilon}F'(\phi^{n})+s_1(\phi^{n+1}-\phi^{n}), \quad \mbox{in}\ \Omega,\\
\label{SIscheme3}
&&\partial_{\mathbf{n}}\mu^{n+1}=0, \quad \mbox{on}\  \Gamma,\\
\label{SIscheme4}
&&\phi^{n+1}|_{\Gamma}=\psi^{n+1}, \quad \mbox{on}\  \Gamma,\\
\label{SIscheme5}
&&\frac{\psi^{n+1}-\psi^{n}}{\tau}=\Delta_{\Gamma}\mu_{\Gamma}^{n+1}, \quad \mbox{on}\  \Gamma,\\
\label{SIscheme6}
&&\mu_{\Gamma}^{n+1}=-\delta\kappa\Delta_{\Gamma}\psi^{n+1}+\frac{1}{\delta}G'(\psi^{n})
+\varepsilon\partial_{\mathbf{n}}\phi^{n+1}+s_2(\psi^{n+1}-\psi^{n}), \quad \mbox{on}\  \Gamma.
\end{eqnarray}
Here, $T$ is an arbitrary and fixed time, $N$ is the number of time steps and $\tau=T/N$ is the step size.
We have the energy stability as follows.
\begin{theorem}\label{ener_stable}
Assume that $s_1\geq\frac{1}{2\varepsilon} \max_{\xi\in\mathbb{R}} F''(\xi)$, $s_2\geq\frac{1}{2\delta} \max_{\eta\in\mathbb{R}} G''(\eta)$, the scheme \eqref{SIscheme1}-\eqref{SIscheme6} is energy stable in the sense that
\begin{equation}
\begin{aligned}
\frac{E(\phi^{n+1},\psi^{n+1})-E(\phi^{n},\psi^{n})}{\tau}&\leq-\|\nabla\mu^{n+1}\|^2_{\Omega}
-\|\nabla_{\Gamma}\mu_{\Gamma}^{n+1}\|^2_{\Gamma},
\end{aligned}
\end{equation}
where
\begin{equation}\label{discreteenergy}
E(\phi^{n},\psi^{n})=\int_{\Omega}\frac{1}{\varepsilon} F(\phi^{n})+\frac{\varepsilon}{2} |\nabla\phi^{n}|^2\mbox{d}x+\int_{\Gamma} \frac{1}{\delta} G(\psi^{n})+\frac{\delta\kappa}{2}|\nabla_{\Gamma}\psi^{n}|^2\mbox{d}S
\end{equation}
\end{theorem}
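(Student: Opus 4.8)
The plan is to test the two evolution equations against their respective chemical potentials and to exploit the exact cancellation of the boundary coupling term built into the Liu--Wu model. First I would take the $L^2(\Omega)$ inner product of \eqref{SIscheme1} with $\mu^{n+1}$; integration by parts together with the no-flux condition \eqref{SIscheme3} gives $\left(\frac{\phi^{n+1}-\phi^{n}}{\tau},\mu^{n+1}\right)_{\Omega}=-\|\nabla\mu^{n+1}\|_{\Omega}^2$. In parallel I would take the $L^2(\Gamma)$ inner product of \eqref{SIscheme5} with $\mu_{\Gamma}^{n+1}$; since $\Gamma$ is a closed surface, the Laplace--Beltrami operator is self-adjoint with no boundary contribution, so $\left(\frac{\psi^{n+1}-\psi^{n}}{\tau},\mu_{\Gamma}^{n+1}\right)_{\Gamma}=-\|\nabla_{\Gamma}\mu_{\Gamma}^{n+1}\|_{\Gamma}^2$. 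Adding these gives the right-hand side of the claimed inequality; it remains to bound the two left-hand sides from below after substituting the definitions of the chemical potentials.

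Next I would insert \eqref{SIscheme2} and \eqref{SIscheme6} into those left-hand sides. In the bulk term, integrating the $-\varepsilon\Delta\phi^{n+1}$ contribution by parts produces the boundary term $-\frac{\varepsilon}{\tau}\left(\phi^{n+1}-\phi^{n},\partial_{\mathbf{n}}\phi^{n+1}\right)_{\Gamma}$, which by the trace identity \eqref{SIscheme4} equals $-\frac{\varepsilon}{\tau}\left(\psi^{n+1}-\psi^{n},\partial_{\mathbf{n}}\phi^{n+1}\right)_{\Gamma}$. The surface potential \eqref{SIscheme6} contains the term $\varepsilon\partial_{\mathbf{n}}\phi^{n+1}$, which, tested against $\frac{\psi^{n+1}-\psi^{n}}{\tau}$, contributes exactly $+\frac{\varepsilon}{\tau}\left(\psi^{n+1}-\psi^{n},\partial_{\mathbf{n}}\phi^{n+1}\right)_{\Gamma}$. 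Summing the two tested equations therefore cancels these coupling terms. This cancellation, which hinges on the Dirichlet-type matching of traces, is the structural heart of the scheme and the step I expect to demand the most care, since it is precisely what lets the bulk and surface energy balances close into a single dissipation estimate.

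For the remaining terms I would invoke two standard identities. The gradient contributions are handled by $\left(\nabla(\phi^{n+1}-\phi^{n}),\nabla\phi^{n+1}\right)_{\Omega}=\frac12\|\nabla\phi^{n+1}\|_{\Omega}^2-\frac12\|\nabla\phi^{n}\|_{\Omega}^2+\frac12\|\nabla(\phi^{n+1}-\phi^{n})\|_{\Omega}^2$ and its surface analogue, which reproduce the Dirichlet parts of $E$ plus a nonnegative remainder. The potential contributions are handled by the Taylor expansion $F(\phi^{n+1})-F(\phi^{n})=F'(\phi^{n})(\phi^{n+1}-\phi^{n})+\frac12 F''(\xi)(\phi^{n+1}-\phi^{n})^2$; solving for $F'(\phi^{n})(\phi^{n+1}-\phi^{n})$ and combining it with the stabilization term $s_1(\phi^{n+1}-\phi^{n})^2$ yields the coefficient $s_1-\frac{1}{2\varepsilon}F''(\xi)$, which is nonnegative under the hypothesis $s_1\geq\frac{1}{2\varepsilon}\max F''$. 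Hence these terms are bounded below by $\frac{1}{\varepsilon\tau}\int_{\Omega}[F(\phi^{n+1})-F(\phi^{n})]\,\mathrm{d}x$, and the surface potential is treated identically with $s_2$ and the bound on $G''$.

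Finally I would collect all contributions. The two tested equations sum on the right to $-\|\nabla\mu^{n+1}\|_{\Omega}^2-\|\nabla_{\Gamma}\mu_{\Gamma}^{n+1}\|_{\Gamma}^2$, while the left side, after the coupling cancellation and after discarding the nonnegative stabilization and gradient-difference remainders, is bounded below by $\frac{1}{\tau}\big[E(\phi^{n+1},\psi^{n+1})-E(\phi^{n},\psi^{n})\big]$. Combining the equality with this lower bound gives $\frac{1}{\tau}\big[E(\phi^{n+1},\psi^{n+1})-E(\phi^{n},\psi^{n})\big]\leq-\|\nabla\mu^{n+1}\|_{\Omega}^2-\|\nabla_{\Gamma}\mu_{\Gamma}^{n+1}\|_{\Gamma}^2$, which is the assertion.
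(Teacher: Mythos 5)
Your proposal is correct and follows essentially the same route as the paper: testing \eqref{SIscheme1} and \eqref{SIscheme5} against $\mu^{n+1}$ and $\mu_{\Gamma}^{n+1}$, substituting \eqref{SIscheme2} and \eqref{SIscheme6}, cancelling the $\varepsilon\partial_{\mathbf{n}}\phi^{n+1}$ coupling terms via the trace condition \eqref{SIscheme4}, and combining the polarization identity for the gradient terms with the Taylor expansion of $F$ and $G$ so that the stabilization terms absorb the second-order remainders under the stated conditions on $s_1$ and $s_2$. No gaps.
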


\begin{proof}
By taking inner product of \eqref{SIscheme1} with $\mu^{n+1}$ in $\Omega$, we have
\begin{equation}\label{proof1}
(\frac{\phi^{n+1}-\phi^{n}}{\tau}, \mu^{n+1})_{\Omega}=(\Delta\mu^{n+1}, \mu^{n+1})_{\Omega}=-\|\nabla\mu^{n+1}\|^2_{\Omega}.
\end{equation}

By using \eqref{SIscheme2}, we have
\begin{equation}\label{proof2}
\begin{aligned}
(\frac{\phi^{n+1}-\phi^{n}}{\tau}, \mu^{n+1})_{\Omega}&=(\frac{\phi^{n+1}-\phi^{n}}{\tau}, -\varepsilon\Delta\phi^{n+1}
+\frac{1}{\varepsilon}F'(\phi^{n})+s_1(\phi^{n+1}-\phi^{n}))_{\Omega},
\end{aligned}
\end{equation}
\begin{equation}\label{proof3}
\begin{aligned}
(\frac{\phi^{n+1}-\phi^{n}}{\tau}, -\varepsilon\Delta\phi^{n+1})_{\Omega}&=
-\varepsilon(\partial_{\mathbf{n}}\phi^{n+1}, \frac{\phi^{n+1}-\phi^{n}}{\tau})_{\Gamma}+\varepsilon(\nabla\phi^{n+1},
\frac{\nabla\phi^{n+1}-\nabla\phi^{n}}{\tau})_{\Omega}.
\end{aligned}
\end{equation}

For the boundary integral term in \eqref{proof3}, by taking the inner product of \eqref{SIscheme5} with $\mu^{n+1}_{\Gamma}$ on $\Gamma$, we obtain
\begin{equation}\label{proof4}
(\frac{\psi^{n+1}-\psi^{n}}{\tau}, \mu^{n+1}_{\Gamma})_{\Gamma}=
(\Delta_{\Gamma} \mu^{n+1}_{\Gamma}, \mu^{n+1}_{\Gamma})_{\Gamma}
=-\|\nabla_{\Gamma} \mu^{n+1}_{\Gamma}\|^2_{\Gamma}.
\end{equation}

By using \eqref{SIscheme6}, we have
\begin{equation}\label{proof5}
\begin{aligned}
(\frac{\psi^{n+1}-\psi^{n}}{\tau}, \mu^{n+1}_{\Gamma})_{\Gamma}&=(\frac{\psi^{n+1}-\psi^{n}}{\tau},-\delta\kappa\Delta_{\Gamma}
\psi^{n+1}+\frac{1}{\delta}G'(\psi^{n})\\
&+\varepsilon\partial_{\mathbf{n}}\phi^{n+1}+s_2(\psi^{n+1}-\psi^{n}))_{\Gamma},
\end{aligned}
\end{equation}
\begin{equation}\label{proof6}
(\frac{\psi^{n+1}-\psi^{n}}{\tau}, -\delta\kappa\Delta_{\Gamma}\psi^{n+1})_{\Gamma}=
(\frac{\nabla_{\Gamma}\psi^{n+1}-\nabla_{\Gamma}\psi^{n}}{\tau}, \delta\kappa\nabla_{\Gamma}\psi^{n+1})_{\Gamma}.
\end{equation}
To handle the nonlinear term associated with $F'$ and $G'$ in \eqref{proof2} and \eqref{proof5}, we need the following identities
\begin{equation}\label{proof7}
\begin{aligned}
F'(\phi^{n})(\phi^{n+1}-\phi^{n})&=F(\phi^{n+1})-F(\phi^{n})-\frac{F''(\eta)}2(\phi^{n+1}-\phi^{n})^2,\\
G'(\phi^{n})(\phi^{n+1}-\phi^{n})&=G(\phi^{n+1})-G(\phi^{n})-\frac{G''(\zeta)}2(\phi^{n+1}-\phi^{n})^2.
\end{aligned}
\end{equation}

Combining the equations mentioned above, we get
$$
\begin{aligned}
&(\frac{\phi^{n+1}-\phi^{n}}{\tau}, \mu^{n+1})_{\Omega}+(\frac{\psi^{n+1}-\psi^{n}}{\tau}, \mu^{n+1}_{\Gamma})_{\Gamma}=-\|\nabla\mu^{n+1}\|^2_{\Omega}-\|\nabla_{\Gamma}\mu^{n+1}_{\Gamma}\|^2_{\Gamma},
\end{aligned}
$$
and
$$
\begin{aligned}
&(\frac{\phi^{n+1}-\phi^{n}}{\tau}, \mu^{n+1})_{\Omega}+(\frac{\psi^{n+1}-\psi^{n}}{\tau}, \mu^{n+1}_{\Gamma})_{\Gamma}\\
&=\varepsilon(\nabla\phi^{n+1},
\frac{\nabla\phi^{n+1}-\nabla\phi^{n}}{\tau})_{\Omega}
+\frac{1}{\varepsilon}(F'(\phi^{n}),\frac{\phi^{n+1}-\phi^{n}}{\tau})_{\Omega}
+\frac{s_1}{\tau}\|\phi^{n+1}-\phi^{n}\|^2_{\Omega}\\
&+(\delta\kappa\nabla_{\Gamma}\psi^{n+1},\frac{\nabla_{\Gamma}\psi^{n+1}-\nabla_{\Gamma}\psi^{n}}{\tau})_{\Gamma}
+\frac{1}{\delta}(G'(\psi^{n}),\frac{\psi^{n+1}-\psi^{n}}{\tau})_{\Gamma}
+\frac{s_2}{\tau}\|\psi^{n+1}-\psi^{n}\|^2_{\Gamma}\\
&=\varepsilon(\nabla\phi^{n+1},
\frac{\nabla\phi^{n+1}-\nabla\phi^{n}}{\tau})_{\Omega}+\frac{1}{\varepsilon}
(\frac{F(\phi^{n+1})-F(\phi^{n})}{\tau},1)_{\Omega}
-\frac{1}{2\varepsilon}(F''(\eta),\frac{(\phi^{n+1}-\phi^{n})^2}{\tau})_{\Omega}\\
&+\frac{s_1}{\tau}\|\phi^{n+1}-\phi^{n}\|^2_{\Omega}
+\delta\kappa(\nabla_{\Gamma}\psi^{n+1},\frac{\nabla_{\Gamma}\psi^{n+1}-\nabla_{\Gamma}\psi^{n}}{\tau})_{\Gamma}
+\frac{1}{\delta}(\frac{G(\psi^{n+1})-G(\psi^{n})}{\tau},1)_{\Gamma}\\
&-\frac{1}{2\delta}(G''(\zeta),\frac{(\psi^{n+1}-\psi^{n})^2}{\tau})_{\Gamma}
+\frac{s_2}{\tau}\|\psi^{n+1}-\psi^{n}\|^2_{\Gamma}\\
&=\frac{\varepsilon}{2\tau}(\|\nabla\phi^{n+1}\|^2_{\Omega}-
\|\nabla\phi^{n}\|^2_{\Omega}+\|\nabla\phi^{n+1}-\nabla\phi^{n}\|^2_{\Omega})
+\frac{1}{\varepsilon\tau}(F(\phi^{n+1})-F(\phi^{n}),1)_{\Omega}\\
&+\frac{1}{\tau}(s_1-\frac{1}{2\varepsilon}F''(\eta))\|\phi^{n+1}-\phi^{n}\|^2_{\Omega}
+\frac{\delta\kappa}{2\tau}(\|\nabla_{\Gamma}\psi^{n+1}\|^2_{\Gamma}\\
&-\|\nabla_{\Gamma}\psi^{n}\|^2_{\Gamma}+\|\nabla_{\Gamma}\psi^{n+1}-\nabla_{\Gamma}\psi^{n}\|^2_{\Gamma})
+\frac{1}{\delta\tau}(G(\psi^{n+1})-G(\psi^{n}),1)_{\Gamma}\\
&+\frac{1}{\tau}
(s_2-\frac{1}{2\delta}G''(\zeta))\|\psi^{n+1}-\psi^{n}\|^2_{\Gamma}\\
&=\frac{1}{\tau}[E(\phi^{n+1},\psi^{n+1})-E(\phi^{n},\psi^{n})]+\frac{\varepsilon}{2\tau}
\|\nabla\phi^{n+1}-\nabla\phi^{n}\|^2_{\Omega}\\
&+\frac{\delta\kappa}{2\tau}\|\nabla_{\Gamma}\psi^{n+1}-\nabla_{\Gamma}\psi^{n}\|^2_{\Gamma}
+\frac{1}{\tau}(s_1-\frac{1}{2\varepsilon}F''(\eta))\|\phi^{n+1}-\phi^{n}\|^2_{\Omega}\\
&+\frac{1}{\tau}(s_2-\frac{1}{2\delta}G''(\zeta))\|\psi^{n+1}-\psi^{n}\|^2_{\Gamma}.
\end{aligned}
$$
Thus, we have
$$
\begin{aligned}
&\frac{1}{\tau}[E(\phi^{n+1},\psi^{n+1})-E(\phi^{n},\psi^{n})]+\frac{\varepsilon}{2\tau}
\|\nabla\phi^{n+1}-\nabla\phi^{n}\|^2_{\Omega}\\
&+\frac{\delta\kappa}{2\tau}\|\nabla_{\Gamma}\psi^{n+1}-\nabla_{\Gamma}\psi^{n}\|^2_{\Gamma}
+\frac{1}{\tau}(s_1-\frac{1}{2\varepsilon}F''(\eta))\|\phi^{n+1}-\phi^{n}\|^2_{\Omega}\\
&+\frac{1}{\tau}(s_2-\frac{1}{2\delta}G''(\zeta))\|\psi^{n+1}-\psi^{n}\|^2_{\Gamma}
=-\|\nabla\mu^{n+1}\|^2_{\Omega}-\|\nabla_{\Gamma}\mu^{n+1}_{\Gamma}\|^2_{\Gamma}\leq0.
\end{aligned}
$$
Therefore, under the conditions that
$$s_1\geq\frac{1}{2\varepsilon} \max_{\xi\in\mathbb{R}} F''(\xi)$$
and
$$s_2\geq\frac{1}{2\delta} \max_{\eta\in\mathbb{R}} G''(\eta),$$
we have
$$
\frac{1}{\tau}[E(\phi^{n+1},\psi^{n+1})-E(\phi^{n},\psi^{n})]\leq0,
$$
namely, the scheme (\ref{SIscheme1})-(\ref{SIscheme6}) is energy stable.
\end{proof}

\section{Error estimates for the stabilized semi-discrete scheme}\label{s4}

In this section, we establish the error estimates for the phase functions $\phi$ and $\psi$ for the stabilized scheme \eqref{SIscheme1}-\eqref{SIscheme6}.

Assume that the Lipschitz properties hold for the derivatives of $F'$ and $G'$,
\begin{equation}
\max_{\phi\in\mathbb{R}} |F^{''}(\phi)|\leq L_1,
\end{equation}
\begin{equation}
\max_{\psi\in\mathbb{R}} |G^{''}(\psi)|\leq L_2,
\end{equation}
which are necessary for error estimates.

The PDE system \eqref{CHLW} can be rewritten as the following truncated form,
\begin{eqnarray}\label{truncatePDE}
&&\frac{\phi(t^{n+1})-\phi(t^{n})}{\tau}=\Delta\mu(t^{n+1})+R_{\phi}^{n+1}, \quad \mbox{in}\  \Omega,\\
\label{truncatePDE2}
&&\mu(t^{n+1})=-\varepsilon\Delta\phi(t^{n+1})+\frac{1}{\varepsilon}F'(\phi(t^{n}))
+s_1(\phi(t^{n+1})-\phi(t^{n}))+R_{\mu}^{n+1}, \quad \mbox{in}\ \Omega,\\
\label{truncatePDE3}
&&\partial_{\mathbf{n}}\mu(t^{n+1})=0, \quad \mbox{on}\  \Gamma,\\
\label{truncatePDE4}
&&\phi(t^{n+1})|_{\Gamma}=\psi(t^{n+1}), \quad \mbox{on}\  \Gamma,\\
\label{truncatePDE5}
&&\frac{\psi(t^{n+1})-\psi(t^{n})}{\tau}=\Delta_{\Gamma}\mu_{\Gamma}(t^{n+1})+R_{\psi}^{n+1}, \quad \mbox{on}\  \Gamma,\\
\label{truncatePDE6}
&&\mu_{\Gamma}(t^{n+1})=-\delta\kappa\Delta_{\Gamma}\psi(t^{n+1})+\frac{1}{\delta}G'(\psi(t^{n}))
+\varepsilon\partial_{\mathbf{n}}\phi(t^{n+1}) \nonumber\\
&&\qquad \qquad +s_2(\psi(t^{n+1})-\psi(t^{n}))+R_{\Gamma}^{n+1}, \quad \mbox{on}\  \Gamma,
\end{eqnarray}
where
\begin{equation}\label{Rphi}
R_{\phi}^{n+1}=\frac{\phi(t^{n+1})-\phi(t^{n})}{\tau}-\phi_t(t^{n+1}),
\end{equation}
\begin{equation}\label{Rpsi}
R_{\psi}^{n+1}=\frac{\psi(t^{n+1})-\psi(t^{n})}{\tau}-\psi_t(t^{n+1}),
\end{equation}
\begin{equation}\label{Rmu}
R_{\mu}^{n+1}=\frac{1}{\varepsilon}F'(\phi(t^{n+1}))-\frac{1}{\varepsilon}F'(\phi(t^{n}))
-s_1(\phi(t^{n+1})-\phi(t^{n})),
\end{equation}
\begin{equation}\label{Rgamma}
R_{\Gamma}^{n+1}=\frac{1}{\delta}G'(\psi(t^{n+1}))-\frac{1}{\delta}G'(\psi(t^{n}))
-s_2(\psi(t^{n+1})-\psi(t^{n})).
\end{equation}

We assume that the exact solutions of the system \eqref{CHLW} possesses the following regularity:
\begin{equation}
(A):
\left\{
\begin{aligned}
&\phi,\phi_t,\phi_{tt}\in L^{\infty}(0,T; H^3(\Omega));\\
&\psi,\psi_t,\psi_{tt}\in L^{\infty}(0,T; H^3(\Gamma));\\
& \mu \in L^{\infty}(0,T; H^2(\Omega));\\
& \mu_{\Gamma} \in L^{\infty}(0,T; H^2(\Gamma)).
\end{aligned}
\right.
\end{equation}
From the Taylor expansion, it's easy to prove that

\begin{lemma}
Under the Assumption (A), the truncation errors satisfy
\begin{equation}
\begin{aligned}
&\|R_{\phi,\tau}\|_{l^{\infty}(H^1(\Omega))}+\|R_{\mu,\tau}\|_{l^{\infty}(H^1(\Omega))}\lesssim \tau,\\
&\|R_{\phi,\tau}\|_{l^{\infty}(L^2(\Omega))}+\|R_{\mu,\tau}\|_{l^{\infty}(L^2(\Omega))}\lesssim \tau,\\
&\|R_{\psi,\tau}\|_{l^{\infty}(H^1(\Gamma))}+\|R_{\Gamma,\tau}\|_{l^{\infty}(H^1(\Gamma))}\lesssim \tau,\\
&\|R_{\psi,\tau}\|_{l^{\infty}(L^2(\Gamma))}+\|R_{\Gamma,\tau}\|_{l^{\infty}(L^2(\Gamma))}\lesssim \tau.
\end{aligned}
\end{equation}
Here, the truncation errors are defined as Eq. \eqref{Rphi}-\eqref{Rgamma}.
And the corresponding sequences are denoted as $\{R_{\phi,\tau}\}$, $\{R_{\psi,\tau}\}$, $\{R_{\mu,\tau}\}$ and $\{R_{\Gamma,\tau}\}$ with $\tau$ the time step size.
Moreover, the discrete norm $\|\cdot\|_{l^{\infty}(\cdot)}$ is defined as Eq. \eqref{discretenorm}.

\end{lemma}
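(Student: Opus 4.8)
The plan is to separate the four truncation errors into two families according to their structure: the ``temporal'' remainders $R_\phi^{n+1}$ and $R_\psi^{n+1}$, which measure the error of the backward difference quotient, and the ``nonlinear'' remainders $R_\mu^{n+1}$ and $R_\Gamma^{n+1}$, which measure how well the explicit treatment of $F'$ and $G'$ together with the stabilization reproduces the implicit values. For the first family I would use Taylor's formula with integral remainder expanded about $t^{n+1}$. Writing $\phi(t^n)=\phi(t^{n+1})-\tau\phi_t(t^{n+1})+\int_{t^n}^{t^{n+1}}(t^n-s)\phi_{tt}(s)\,ds$ and substituting into \eqref{Rphi} gives the pointwise-in-space identity $R_\phi^{n+1}=\frac{1}{\tau}\int_{t^n}^{t^{n+1}}(t^n-s)\phi_{tt}(s)\,ds$, and similarly for $R_\psi^{n+1}$ with $\psi_{tt}$. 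Taking the $H^1(\Omega)$ (or $L^2(\Omega)$) norm and applying the generalized Minkowski inequality together with the crude bound $|t^n-s|\le\tau$ on $[t^n,t^{n+1}]$ yields $\|R_\phi^{n+1}\|_{H^1(\Omega)}\le\frac{1}{\tau}\int_{t^n}^{t^{n+1}}|t^n-s|\,\|\phi_{tt}(s)\|_{H^1(\Omega)}\,ds\le\tau\,\|\phi_{tt}\|_{L^\infty(0,T;H^1(\Omega))}$, which is $\lesssim\tau$ by Assumption (A) since $H^3\hookrightarrow H^1$. The $L^2$ bounds and the boundary analogues for $R_\psi^{n+1}$ follow identically, using $\psi_{tt}\in L^\infty(0,T;H^3(\Gamma))$.

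For the nonlinear family I would first record that $\phi(t^{n+1})-\phi(t^n)=\int_{t^n}^{t^{n+1}}\phi_t(s)\,ds$ is $O(\tau)$ in every norm controlled by (A). Both terms in \eqref{Rmu} are proportional to this increment, so the $L^2$ estimate is immediate: the stabilization term obeys $s_1\|\phi(t^{n+1})-\phi(t^n)\|_{L^2(\Omega)}\le s_1\tau\|\phi_t\|_{L^\infty(0,T;L^2(\Omega))}$, while the nonlinear difference is handled by the Lipschitz bound $|F''|\le L_1$ through $\|F'(\phi(t^{n+1}))-F'(\phi(t^n))\|_{L^2(\Omega)}\le L_1\|\phi(t^{n+1})-\phi(t^n)\|_{L^2(\Omega)}\lesssim\tau$. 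Hence $\|R_\mu^{n+1}\|_{L^2(\Omega)}\lesssim\tau$, and $\|R_\Gamma^{n+1}\|_{L^2(\Gamma)}\lesssim\tau$ in exactly the same way with $G$, $L_2$, $s_2$ and $\psi_t\in L^\infty(0,T;L^2(\Gamma))$.

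The delicate point, and the one I expect to be the main obstacle, is the $H^1$ control of the nonlinear remainders, because the chain rule $\nabla F'(\phi)=F''(\phi)\nabla\phi$ produces a difference of products that cannot be closed by the boundedness of $F''$ alone. I would split $\nabla[F'(\phi(t^{n+1}))-F'(\phi(t^n))]=F''(\phi(t^{n+1}))\,[\nabla\phi(t^{n+1})-\nabla\phi(t^n)]+[F''(\phi(t^{n+1}))-F''(\phi(t^n))]\,\nabla\phi(t^n)$. The first summand is $\lesssim\tau$ using $|F''|\le L_1$ and $\|\nabla\phi(t^{n+1})-\nabla\phi(t^n)\|_{L^2(\Omega)}\le\tau\|\nabla\phi_t\|_{L^\infty(0,T;L^2(\Omega))}$, which (A) supplies via $\phi_t\in L^\infty(0,T;H^3(\Omega))$. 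The second summand requires estimating $F''(\phi(t^{n+1}))-F''(\phi(t^n))$ in $L^\infty(\Omega)$; this is where one needs the smoothness of the potential beyond a mere bound on $F''$: assuming $F\in C^3$ with $F''$ Lipschitz, one gets $\|F''(\phi(t^{n+1}))-F''(\phi(t^n))\|_{L^\infty(\Omega)}\lesssim\|\phi(t^{n+1})-\phi(t^n)\|_{L^\infty(\Omega)}\le\tau\|\phi_t\|_{L^\infty(0,T;L^\infty(\Omega))}$, and the embedding $H^2(\Omega)\hookrightarrow L^\infty(\Omega)$ (valid for $d\le3$) together with $\nabla\phi(t^n)\in L^\infty(0,T;L^2(\Omega))$ then closes the bound at order $\tau$. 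The boundary remainder $R_\Gamma^{n+1}$ is treated identically on the $(d-1)$-dimensional manifold $\Gamma$, replacing $\nabla$ by $\nabla_\Gamma$, using $\psi_t\in L^\infty(0,T;H^3(\Gamma))$ and the embedding $H^2(\Gamma)\hookrightarrow L^\infty(\Gamma)$. Collecting the four bounds gives the claim; the only genuine subtlety is making explicit the $C^3$-type regularity of $F$ and $G$ that the $H^1$ estimates tacitly use.
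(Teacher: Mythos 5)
Your proof is correct and follows exactly the route the paper indicates (Taylor expansion with integral remainder for $R_{\phi}^{n+1},R_{\psi}^{n+1}$, and Lipschitz/product-rule estimates for $R_{\mu}^{n+1},R_{\Gamma}^{n+1}$); the paper itself omits all details, stating only that the lemma follows ``from the Taylor expansion.'' Your observation that the $H^1$ bounds on the nonlinear remainders require $F''$ and $G''$ to be Lipschitz (not merely bounded) is a genuine and accurate point: the paper's stated hypotheses give only $|F''|\leq L_1$, $|G''|\leq L_2$, but the paper tacitly uses the stronger property later (``Since $F''$ is bounded and Lipschitz\dots''), so your proof makes explicit an assumption the paper relies on without declaring.
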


Thus we can establish the estimates for the stabilized scheme as follows.
\begin{theorem}
Provided that the exact solutions are sufficiently smooth or under the assumption (A), then for $0\leq m \leq [\frac{T}{\tau}]-1$,
the solution $(\phi^m,\psi^m)$ of the scheme \eqref{SIscheme1}-\eqref{SIscheme6} satisfy the following error estimates
\begin{equation}\label{finalestimate}
\begin{aligned}
&\|e_{\phi,\tau}\|_{l^{\infty}(H^1(\Omega))}+\|e_{\psi,\tau}\|_{l^{\infty}(H^1(\Gamma))}\lesssim\tau,\\
&\|e_{\phi,\tau}\|_{l^{\infty}(L^2(\Omega))}+\|e_{\psi,\tau}\|_{l^{\infty}(L^2(\Gamma))}\lesssim\tau.
\end{aligned}
\end{equation}
Here, the error functions are defined as
\begin{equation}
\begin{aligned}
&e_{\phi}^n=\phi(t^{n})-\phi^n,\qquad e_{\mu}^n=\mu(t^{n})-\mu^n,\\
&e_{\psi}^n=\psi(t^{n})-\psi^n, \qquad e_{\Gamma}^n=\mu_{\Gamma}(t^{n})-\mu_{\Gamma}^n,\\
&e_{\phi}^n|_{\Gamma}=e_{\psi}^n.
\end{aligned}
\end{equation}
The corresponding sequences of error functions are denoted as $e_{\phi,\tau}$, $e_{\psi,\tau}$, $e_{\mu,\tau}$ and $e_{\Gamma,\tau}$, and the discrete norm $\|\cdot\|_{l^{\infty}(\cdot)}$ is defined as Eq. \eqref{discretenorm}.
\end{theorem}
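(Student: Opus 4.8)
The plan is to subtract the scheme \eqref{SIscheme1}-\eqref{SIscheme6} from its truncated counterpart \eqref{truncatePDE}-\eqref{truncatePDE6} and then run a discrete energy/dissipation argument modeled directly on the proof of Theorem \ref{ener_stable}. Assuming the scheme is initialized exactly, so that $e_{\phi}^{0}=0$ and $e_{\psi}^{0}=0$, the error functions satisfy
\begin{equation}\label{erreq}
\begin{aligned}
&\tfrac{e_{\phi}^{n+1}-e_{\phi}^{n}}{\tau}=\Delta e_{\mu}^{n+1}+R_{\phi}^{n+1},\qquad \partial_{\mathbf{n}}e_{\mu}^{n+1}=0,\\
&e_{\mu}^{n+1}=-\varepsilon\Delta e_{\phi}^{n+1}+\tfrac{1}{\varepsilon}\big(F'(\phi(t^{n}))-F'(\phi^{n})\big)+s_{1}(e_{\phi}^{n+1}-e_{\phi}^{n})+R_{\mu}^{n+1},\\
&\tfrac{e_{\psi}^{n+1}-e_{\psi}^{n}}{\tau}=\Delta_{\Gamma}e_{\Gamma}^{n+1}+R_{\psi}^{n+1},\qquad e_{\phi}^{n+1}|_{\Gamma}=e_{\psi}^{n+1},\\
&e_{\Gamma}^{n+1}=-\delta\kappa\Delta_{\Gamma}e_{\psi}^{n+1}+\tfrac{1}{\delta}\big(G'(\psi(t^{n}))-G'(\psi^{n})\big)+\varepsilon\partial_{\mathbf{n}}e_{\phi}^{n+1}+s_{2}(e_{\psi}^{n+1}-e_{\psi}^{n})+R_{\Gamma}^{n+1}.
\end{aligned}
\end{equation}
Integrating the first and third equations over $\Omega$ and $\Gamma$ and using $\partial_{\mathbf{n}}e_{\mu}^{n+1}=0$, that $\Gamma$ is closed, and the mass conservation \eqref{massconservation}, one checks that $e_{\phi}^{n}$, $e_{\psi}^{n}$ and the truncation errors $R_{\phi}^{n+1}$, $R_{\psi}^{n+1}$ all have zero mean on $\Omega$, respectively $\Gamma$. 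Hence the Poincaré--Wirtinger inequality is available throughout, which is what will let me pass between the gradient seminorms and the full $L^{2}$ norms.

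The core step is to test the first error equation with $e_{\mu}^{n+1}$ in $\Omega$ and the third with $e_{\Gamma}^{n+1}$ on $\Gamma$, while simultaneously expanding the same left-hand sides through the constitutive error equations for $e_{\mu}^{n+1}$ and $e_{\Gamma}^{n+1}$, exactly as in the chain \eqref{proof1}-\eqref{proof7}. The decisive structural point, identical to the stability proof, is that the two boundary contributions $-\varepsilon(\tfrac{e_{\phi}^{n+1}-e_{\phi}^{n}}{\tau},\partial_{\mathbf{n}}e_{\phi}^{n+1})_{\Gamma}$ and $+\varepsilon(\tfrac{e_{\psi}^{n+1}-e_{\psi}^{n}}{\tau},\partial_{\mathbf{n}}e_{\phi}^{n+1})_{\Gamma}$ cancel because of the trace identity $e_{\phi}^{n+1}|_{\Gamma}=e_{\psi}^{n+1}$. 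Using $2(a-b,a)=|a|^{2}-|b|^{2}+|a-b|^{2}$ on the gradient and stabilization terms and discarding the resulting nonnegative increment terms, one arrives, after multiplication by $\tau$, at
\begin{equation}
\begin{aligned}
&\tfrac{\varepsilon}{2}\big(\|\nabla e_{\phi}^{n+1}\|_{\Omega}^{2}-\|\nabla e_{\phi}^{n}\|_{\Omega}^{2}\big)+\tfrac{\delta\kappa}{2}\big(\|\nabla_{\Gamma}e_{\psi}^{n+1}\|_{\Gamma}^{2}-\|\nabla_{\Gamma}e_{\psi}^{n}\|_{\Gamma}^{2}\big)+\tau\|\nabla e_{\mu}^{n+1}\|_{\Omega}^{2}+\tau\|\nabla_{\Gamma}e_{\Gamma}^{n+1}\|_{\Gamma}^{2}\\
&\leq \tau\big(R_{\phi}^{n+1},e_{\mu}^{n+1}\big)_{\Omega}+\tau\big(R_{\psi}^{n+1},e_{\Gamma}^{n+1}\big)_{\Gamma}-\tfrac{1}{\varepsilon}\big(e_{\phi}^{n+1}-e_{\phi}^{n},F'(\phi(t^{n}))-F'(\phi^{n})\big)_{\Omega}-\tfrac{1}{\delta}\big(e_{\psi}^{n+1}-e_{\psi}^{n},G'(\psi(t^{n}))-G'(\psi^{n})\big)_{\Gamma}+\mathcal{R},
\end{aligned}
\end{equation}
where $\mathcal{R}$ collects the $R_{\mu}$ and $R_{\Gamma}$ contributions.

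The \textbf{main obstacle} is that the two nonlinear terms do not obviously carry the factor $\tau$ needed to close a discrete Gronwall inequality: bounding $\tfrac1\varepsilon(e_{\phi}^{n+1}-e_{\phi}^{n},F'(\phi(t^{n}))-F'(\phi^{n}))_{\Omega}$ by Young's inequality and the Lipschitz bound $|F'(\phi(t^{n}))-F'(\phi^{n})|\le L_{1}|e_{\phi}^{n}|$ leaves a term proportional to $\|e_{\phi}^{n}\|_{\Omega}^{2}$ with no $\tau$ in front, after which Gronwall fails. To recover the missing power of $\tau$ I would replace the increment by means of the first error equation, $e_{\phi}^{n+1}-e_{\phi}^{n}=\tau(\Delta e_{\mu}^{n+1}+R_{\phi}^{n+1})$, and integrate by parts; the boundary term vanishes since $\partial_{\mathbf{n}}e_{\mu}^{n+1}=0$, giving
\[
-\tfrac{1}{\varepsilon}\big(e_{\phi}^{n+1}-e_{\phi}^{n},\,F'(\phi(t^{n}))-F'(\phi^{n})\big)_{\Omega}=\tfrac{\tau}{\varepsilon}\big(\nabla e_{\mu}^{n+1},\,\nabla[F'(\phi(t^{n}))-F'(\phi^{n})]\big)_{\Omega}-\tfrac{\tau}{\varepsilon}\big(R_{\phi}^{n+1},\,F'(\phi(t^{n}))-F'(\phi^{n})\big)_{\Omega},
\]
with the analogous manipulation on $\Gamma$ (where $\Gamma$ is closed). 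The first term is absorbed into the dissipation $\tau\|\nabla e_{\mu}^{n+1}\|_{\Omega}^{2}$ up to $C\tau\|\nabla[F'(\phi(t^{n}))-F'(\phi^{n})]\|_{\Omega}^{2}$. The genuinely delicate point, which I expect to require the most care, is to bound $\|\nabla[F'(\phi(t^{n}))-F'(\phi^{n})]\|_{\Omega}$ by $C\|e_{\phi}^{n}\|_{H^{1}(\Omega)}$: writing $\nabla[F'(\phi(t^{n}))-F'(\phi^{n})]=F''(\phi(t^{n}))\nabla e_{\phi}^{n}+(F''(\phi(t^{n}))-F''(\phi^{n}))\nabla\phi^{n}$, the first summand is controlled by the bounded $F''$, but the second needs a uniform $W^{1,3}$- or $L^{\infty}$-type a priori bound on $\phi^{n}$ (together with the Lipschitz bound on $F''$ and the Sobolev embeddings valid for $d\le 3$). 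Securing such a bound, beyond the $H^{1}$ control already furnished by Theorem \ref{ener_stable} and the regularity assumption (A) for the exact solution, is the technical crux, and may be organized as an induction on $n$.

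Granting this, Poincaré--Wirtinger turns the bound into $\lesssim\|\nabla e_{\phi}^{n}\|_{\Omega}$, so the whole nonlinear contribution becomes $\le\tfrac{\tau}{2}\|\nabla e_{\mu}^{n+1}\|_{\Omega}^{2}+C\tau\|\nabla e_{\phi}^{n}\|_{\Omega}^{2}+C\tau^{3}$, now carrying the required $\tau$. The truncation terms are routine: since $R_{\phi}^{n+1}$ is mean-zero, $\tau(R_{\phi}^{n+1},e_{\mu}^{n+1})_{\Omega}=\tau(R_{\phi}^{n+1},e_{\mu}^{n+1}-\overline{e_{\mu}^{n+1}})_{\Omega}\le\tfrac{\tau}{2}\|\nabla e_{\mu}^{n+1}\|_{\Omega}^{2}+C\tau\|R_{\phi}^{n+1}\|_{\Omega}^{2}$ by Poincaré, and the Lemma gives $\|R_{\phi}^{n+1}\|_{\Omega}\lesssim\tau$, contributing $O(\tau^{3})$; the $R_{\psi},R_{\mu},R_{\Gamma}$ terms are treated identically. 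Summing over $n=0,\dots,m$, the gradient energy telescopes, the dissipation and increment terms are nonnegative, and the $O(1/\tau)$ truncation contributions sum to $O(\tau^{2})$, yielding
\[
\tfrac{\varepsilon}{2}\|\nabla e_{\phi}^{m+1}\|_{\Omega}^{2}+\tfrac{\delta\kappa}{2}\|\nabla_{\Gamma}e_{\psi}^{m+1}\|_{\Gamma}^{2}\lesssim\tau^{2}+\tau\sum_{n=0}^{m}\big(\|\nabla e_{\phi}^{n}\|_{\Omega}^{2}+\|\nabla_{\Gamma}e_{\psi}^{n}\|_{\Gamma}^{2}\big).
\]
The discrete Gronwall lemma then gives $\|\nabla e_{\phi}^{m+1}\|_{\Omega}^{2}+\|\nabla_{\Gamma}e_{\psi}^{m+1}\|_{\Gamma}^{2}\lesssim\tau^{2}$ uniformly in $m$, which is the $l^{\infty}(H^{1})$ bound of \eqref{finalestimate}; the $l^{\infty}(L^{2})$ bound follows at once from Poincaré--Wirtinger, since $e_{\phi}^{n}$ and $e_{\psi}^{n}$ are mean-zero.
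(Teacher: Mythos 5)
Your overall strategy coincides with the paper's: derive the error equations \eqref{erroreq}--\eqref{erroreq6}, test with $e_{\mu}^{n+1}$ and $e_{\Gamma}^{n+1}$, exploit the cancellation of the $\varepsilon\partial_{\mathbf{n}}e_{\phi}^{n+1}$ boundary terms through the trace identity $e_{\phi}^{n+1}|_{\Gamma}=e_{\psi}^{n+1}$, restore the missing factor of $\tau$ in the nonlinear and truncation terms by substituting $e_{\phi}^{n+1}-e_{\phi}^{n}=\tau(\Delta e_{\mu}^{n+1}+R_{\phi}^{n+1})$ and integrating by parts, and close with the discrete Gronwall lemma. Your route to the $l^{\infty}(L^{2})$ bounds is a genuine and correct simplification: you observe that $e_{\phi}^{n}$ and $e_{\psi}^{n}$ are mean-free and invoke Poincar\'e--Wirtinger, whereas the paper additionally tests \eqref{erroreq} with $\varepsilon\tau e_{\phi}^{n+1}$ and \eqref{erroreq5} with $\delta\kappa\tau e_{\psi}^{n+1}$, which generates the extra cross terms ($A_3$, $A_4$ in the paper's notation) that your argument never has to handle.

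There is, however, one genuine gap, and you flag it yourself without resolving it. You split
\begin{equation*}
\nabla\bigl[F'(\phi(t^{n}))-F'(\phi^{n})\bigr]=F''(\phi(t^{n}))\nabla e_{\phi}^{n}+\bigl(F''(\phi(t^{n}))-F''(\phi^{n})\bigr)\nabla\phi^{n},
\end{equation*}
which places the gradient of the \emph{numerical} solution in the second summand, and you then need a uniform $L^{\infty}$- or $W^{1,3}$-type bound on $\phi^{n}$ that neither Theorem \ref{ener_stable} (which only yields $H^{1}$ control) nor assumption (A) (which concerns the exact solution) provides; saying it ``may be organized as an induction on $n$'' leaves the crux of the estimate unproved. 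The paper avoids this entirely by splitting the other way,
\begin{equation*}
\nabla H^{n}=\bigl(F''(\phi(t^{n}))-F''(\phi^{n})\bigr)\nabla\phi(t^{n})+F''(\phi^{n})\nabla e_{\phi}^{n},
\end{equation*}
so that the gradient falls on the exact solution: $\|\nabla\phi(t^{n})\|_{L^{\infty}}$ is controlled by $\|\phi(t^{n})\|_{H^{3}(\Omega)}$ via Sobolev embedding for $d\leq 3$, the Lipschitz continuity of $F''$ handles the difference of the $F''$ values, and the boundedness of $F''$ handles the second summand, giving $\|\nabla H^{n}\|_{\Omega}\lesssim\|e_{\phi}^{n}\|_{\Omega}+\|\nabla e_{\phi}^{n}\|_{\Omega}$ with no a priori information on $\phi^{n}$ beyond $e_{\phi}^{n}\in H^{1}(\Omega)$. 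The same remark applies to $\nabla_{\Gamma}\tilde{H}^{n}$ on $\Gamma$. With that one-line change of decomposition your argument closes; as a minor bookkeeping point, you should also distribute the absorbed fractions of $\tau\|\nabla e_{\mu}^{n+1}\|_{\Omega}^{2}$ more carefully (the paper uses $\tau/8$ and $\tau/4$ portions), since taking $\tau/2$ twice already exhausts the available dissipation before the $R_{\mu}^{n+1}$ contribution is treated.
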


\begin{proof}
We use the mathematical induction to prove this theorem. When $m=0$, we have $e_{\phi}^0=e_{\psi}^0=\nabla e_{\phi}^0=\nabla_{\Gamma} e_{\psi}^0=0$. Obviously, \eqref{finalestimate} holds. Assuming that the error estimate holds for all $n\leq m$, we need to show that the error estimate holds for $e_{\phi}^{m+1}$ and $e_{\psi}^{m+1}$. For each $n\leq m$,
by subtracting \eqref{truncatePDE}-\eqref{truncatePDE6} from the corresponding scheme \eqref{SIscheme1}-\eqref{SIscheme6}, we derive the error equations as follows,
\begin{eqnarray}\label{erroreq}
&&\frac{1}{\tau}(e_{\phi}^{n+1}-e_{\phi}^{n})=\Delta e_{\mu}^{n+1}+R_{\phi}^{n+1}, \quad \mbox{in}\  \Omega,\\
\label{erroreq2}
&&e_{\mu}^{n+1}=-\varepsilon\Delta e_{\phi}^{n+1}+\frac{1}{\varepsilon}(F'(\phi(t^{n}))-F'(\phi^{n}))+s_1(e_{\phi}^{n+1}-e_{\phi}^{n})
+R_{\mu}^{n+1}, \quad \mbox{in}\ \Omega,\\
\label{erroreq3}
&&\partial_{\mathbf{n}}e_{\mu}^{n+1}=0, \quad \mbox{on}\  \Gamma,\\
\label{erroreq4}
&&e_{\phi}^{n+1}|_{\Gamma}=e_{\psi}^{n+1}, \quad \mbox{on}\  \Gamma,\\
\label{erroreq5}
&&\frac{1}{\tau}(e_{\psi}^{n+1}-e_{\psi}^{n})=\Delta_{\Gamma}e_{\Gamma}^{n+1}+R_{\psi}^{n+1}, \quad \mbox{on}\  \Gamma,\\
\label{erroreq6}
&&e_{\Gamma}^{n+1}=-\delta\kappa\Delta_{\Gamma}e_{\psi}^{n+1}+\frac{1}{\delta}(G'(\psi(t^{n}))-G'(\psi^{n}))
+\varepsilon\partial_{\mathbf{n}}e_{\phi}^{n+1} \nonumber\\
&&\qquad \quad +s_2(e_{\psi}^{n+1}-e_{\psi}^{n})+R_{\Gamma}^{n+1}, \quad \mbox{on}\  \Gamma.
\end{eqnarray}

By taking the $L^2$ inner product of \eqref{erroreq} with $\tau e_{\mu}^{n+1}$ in $\Omega$, we obtain
$$
(e_{\phi}^{n+1}-e_{\phi}^{n}, e_{\mu}^{n+1})_{\Omega}+\tau\|\nabla e_{\mu}^{n+1}\|_{\Omega}^2=\tau(R_{\phi}^{n+1},e_{\mu}^{n+1})_{\Omega}.
$$
By taking the $L^2$ inner product of \eqref{erroreq} with $\varepsilon\tau e_{\phi}^{n+1}$ in $\Omega$, we obtain
$$
\begin{aligned}
\frac{\varepsilon}2(\| e_{\phi}^{n+1}\|_{\Omega}^2-\|e_{\phi}^{n}\|_{\Omega}^2+\| e_{\phi}^{n+1}-e_{\phi}^{n}\|_{\Omega}^2)&=-\varepsilon\tau(\nabla e_{\mu}^{n+1},\nabla e_{\phi}^{n+1})_{\Omega}+\varepsilon\tau(R_{\phi}^{n+1},e_{\phi}^{n+1})_{\Omega},
\end{aligned}
$$
where the boundary terms vanish due to $\partial_{\mathbf{n}}e_{\mu}^{n+1}=0$.
By taking the $L^2$ inner product of \eqref{erroreq2} with $-(e_{\phi}^{n+1}-e_{\phi}^{n})$ in $\Omega$, we obtain
$$
\begin{aligned}
&-(e_{\mu}^{n+1}, e_{\phi}^{n+1}-e_{\phi}^{n})_{\Omega}+\frac{\varepsilon}2(\| \nabla e_{\phi}^{n+1}\|_{\Omega}^2-\|\nabla e_{\phi}^{n}\|_{\Omega}^2+\| \nabla e_{\phi}^{n+1}-\nabla e_{\phi}^{n}\|_{\Omega}^2)+s_1\|e_{\phi}^{n+1}-e_{\phi}^{n}\|_{\Omega}^2=\\
&\varepsilon(\partial_{\mathbf{n}}e_{\phi}^{n+1},
e_{\phi}^{n+1}-e_{\phi}^{n})_{\Gamma}-\frac{1}{\varepsilon}(F'(\phi(t^{n}))-F'(\phi^{n}), e_{\phi}^{n+1}-e_{\phi}^{n})_{\Omega}-(R_{\mu}^{n+1},e_{\phi}^{n+1}-e_{\phi}^{n})_{\Omega}.
\end{aligned}
$$

By combining the equations above, we derive
 \begin{equation}\label{errorin}
\begin{aligned}
&\frac{\varepsilon}2(\| \nabla e_{\phi}^{n+1}\|_{\Omega}^2-\|\nabla e_{\phi}^{n}\|_{\Omega}^2+\| \nabla e_{\phi}^{n+1}-\nabla e_{\phi}^{n}\|_{\Omega}^2)+s_1\|e_{\phi}^{n+1}-e_{\phi}^{n}\|_{\Omega}^2\\
&+\frac{\varepsilon}2(\| e_{\phi}^{n+1}\|_{\Omega}^2-\| e_{\phi}^{n}\|_{\Omega}^2+\| e_{\phi}^{n+1}- e_{\phi}^{n}\|_{\Omega}^2)+\tau\|\nabla e_{\mu}^{n+1}\|_{\Omega}^2\\
&=\tau(R_{\phi}^{n+1},e_{\mu}^{n+1})_{\Omega}-\varepsilon\tau(\nabla e_{\mu}^{n+1},\nabla e_{\phi}^{n+1})_{\Omega}+\varepsilon\tau(R_{\phi}^{n+1},e_{\phi}^{n+1})_{\Omega}\\
&-(R_{\mu}^{n+1},e_{\phi}^{n+1}-e_{\phi}^{n})_{\Omega}+\varepsilon(\partial_{\mathbf{n}}e_{\phi}^{n+1},
e_{\phi}^{n+1}-e_{\phi}^{n})_{\Gamma}-\frac{1}{\varepsilon}(F'(\phi(t^{n}))-F'(\phi^{n}), e_{\phi}^{n+1}-e_{\phi}^{n})_{\Omega}.
\end{aligned}
\end{equation}

For the boundary term, by taking the $L^2$ inner product of \eqref{erroreq5} with $\tau e_{\Gamma}^{n+1}$ on $\Gamma$, we obtain
$$
(e_{\psi}^{n+1}-e_{\psi}^{n}, e_{\Gamma}^{n+1})_{\Gamma}+\tau\|\nabla_{\Gamma} e_{\Gamma}^{n+1}\|_{\Gamma}^2=\tau(R_{\psi}^{n+1},e_{\Gamma}^{n+1})_{\Gamma}.
$$
By taking the $L^2$ inner product of \eqref{erroreq5} with $\delta\kappa\tau e_{\psi}^{n+1}$ on $\Gamma$, we obtain
$$
\begin{aligned}
\frac{\delta\kappa}2(\| e_{\psi}^{n+1}\|_{\Gamma}^2-\|e_{\psi}^{n}\|_{\Gamma}^2+\| e_{\psi}^{n+1}-e_{\psi}^{n}\|_{\Gamma}^2)&=-\delta\kappa\tau(\nabla_{\Gamma} e_{\Gamma}^{n+1},\nabla_{\Gamma} e_{\psi}^{n+1})_{\Gamma}+\delta\kappa\tau(R_{\psi}^{n+1},e_{\psi}^{n+1})_{\Gamma},
\end{aligned}
$$
where the boundary terms vanish due to $\Gamma$ is closed.
By taking the $L^2$ inner product of \eqref{erroreq6} with $-(e_{\psi}^{n+1}-e_{\psi}^{n})$ on $\Gamma$, we obtain
$$
\begin{aligned}
&-(e_{\Gamma}^{n+1}, e_{\psi}^{n+1}-e_{\psi}^{n})_{\Gamma}+\frac{\delta\kappa}2(\| \nabla_{\Gamma} e_{\psi}^{n+1}\|_{\Gamma}^2-\|\nabla_{\Gamma} e_{\psi}^{n}\|_{\Gamma}^2\\
&+\| \nabla_{\Gamma} e_{\psi}^{n+1}-\nabla_{\Gamma} e_{\psi}^{n}\|_{\Gamma}^2)
+s_2\|e_{\psi}^{n+1}-e_{\psi}^{n}\|_{\Gamma}^2\\
&=-\varepsilon(\partial_{\mathbf{n}}e_{\phi}^{n+1},
e_{\psi}^{n+1}-e_{\psi}^{n})_{\Gamma}-\frac{1}{\delta}(G'(\psi(t^{n}))-G'(\psi^{n}), e_{\psi}^{n+1}-e_{\psi}^{n})_{\Gamma}-(R_{\Gamma}^{n+1},e_{\psi}^{n+1}-e_{\psi}^{n})_{\Gamma}.
\end{aligned}
$$
By combining the equations above, we derive
 \begin{equation}\label{errorboun}
\begin{aligned}
&\frac{\delta\kappa}2(\| \nabla_{\Gamma} e_{\psi}^{n+1}\|_{\Gamma}^2-\|\nabla_{\Gamma} e_{\psi}^{n}\|_{\Gamma}^2+\| \nabla_{\Gamma} e_{\psi}^{n+1}-\nabla_{\Gamma} e_{\psi}^{n}\|_{\Gamma}^2)+s_2\|e_{\psi}^{n+1}-e_{\psi}^{n}\|_{\Gamma}^2\\
&+\frac{\delta\kappa}2(\| e_{\psi}^{n+1}\|_{\Gamma}^2-\| e_{\psi}^{n}\|_{\Gamma}^2+\| e_{\psi}^{n+1}- e_{\psi}^{n}\|_{\Gamma}^2)+\tau\|\nabla_{\Gamma} e_{\Gamma}^{n+1}\|_{\Gamma}^2\\
&=\tau(R_{\psi}^{n+1},e_{\Gamma}^{n+1})_{\Gamma}-\delta\kappa\tau(\nabla_{\Gamma} e_{\Gamma}^{n+1},\nabla_{\Gamma} e_{\psi}^{n+1})_{\Gamma}+\tau\delta\kappa(R_{\psi}^{n+1},e_{\psi}^{n+1})_{\Gamma}\\
&-(R_{\Gamma}^{n+1},e_{\psi}^{n+1}-e_{\psi}^{n})_{\Gamma}-\varepsilon(\partial_{\mathbf{n}}e_{\phi}^{n+1},
e_{\psi}^{n+1}-e_{\psi}^{n})_{\Gamma}-\frac{1}{\delta}(G'(\psi(t^{n}))-G'(\psi^{n}), e_{\psi}^{n+1}-e_{\psi}^{n})_{\Gamma}.
\end{aligned}
\end{equation}

By combining \eqref{errorin} and \eqref{errorboun} together, we derive
\begin{equation}\label{errorzong}
\begin{aligned}
&\frac{\varepsilon}2(\| \nabla e_{\phi}^{n+1}\|_{\Omega}^2-\|\nabla e_{\phi}^{n}\|_{\Omega}^2+\| \nabla e_{\phi}^{n+1}-\nabla e_{\phi}^{n}\|_{\Omega}^2)+\frac{\varepsilon}2(\| e_{\phi}^{n+1}\|_{\Omega}^2-\| e_{\phi}^{n}\|_{\Omega}^2+\| e_{\phi}^{n+1}- e_{\phi}^{n}\|_{\Omega}^2)\\
&+\frac{\delta\kappa}2(\| \nabla_{\Gamma} e_{\psi}^{n+1}\|_{\Gamma}^2-\|\nabla_{\Gamma} e_{\psi}^{n}\|_{\Gamma}^2+\| \nabla_{\Gamma} e_{\psi}^{n+1}-\nabla_{\Gamma} e_{\psi}^{n}\|_{\Gamma}^2)\\
&+\frac{\delta\kappa}2(\| e_{\psi}^{n+1}\|_{\Gamma}^2-\| e_{\psi}^{n}\|_{\Gamma}^2+\| e_{\psi}^{n+1}- e_{\psi}^{n}\|_{\Gamma}^2)
+s_1\|e_{\phi}^{n+1}-e_{\phi}^{n}\|_{\Omega}^2+s_2\|e_{\psi}^{n+1}-e_{\psi}^{n}\|_{\Gamma}^2\\
&+\tau\|\nabla e_{\mu}^{n+1}\|_{\Omega}^2+\tau\|\nabla_{\Gamma} e_{\Gamma}^{n+1}\|_{\Gamma}^2\\
&=\tau(R_{\phi}^{n+1},e_{\mu}^{n+1})_{\Omega}+\tau(R_{\psi}^{n+1},e_{\Gamma}^{n+1})_{\Gamma}\quad (:=\mbox{term }A_1)\\
&-(R_{\mu}^{n+1},e_{\phi}^{n+1}-e_{\phi}^{n})_{\Omega}-(R_{\Gamma}^{n+1},e_{\psi}^{n+1}-e_{\psi}^{n})_{\Gamma}
\quad (:=\mbox{term }A_2)\\
&-\varepsilon\tau(\nabla e_{\mu}^{n+1},\nabla e_{\phi}^{n+1})_{\Omega}-\delta\kappa\tau(\nabla_{\Gamma} e_{\Gamma}^{n+1},\nabla_{\Gamma} e_{\psi}^{n+1})_{\Gamma}\quad (:=\mbox{term }A_3)\\
&+\varepsilon\tau(R_{\phi}^{n+1},e_{\phi}^{n+1})_{\Omega}+\tau\delta\kappa(R_{\psi}^{n+1},e_{\psi}^{n+1})_{\Gamma}
\quad (:=\mbox{term }A_4)\\
&-\frac{1}{\varepsilon}(F'(\phi(t^{n}))-F'(\phi^{n}), e_{\phi}^{n+1}-e_{\phi}^{n})_{\Omega}-\frac{1}{\delta}(G'(\psi(t^{n}))-G'(\psi^{n}), e_{\psi}^{n+1}-e_{\psi}^{n})_{\Gamma}\\
&\quad (:=\mbox{term }A_5)
\end{aligned}
\end{equation}

For simplicity, we define $H^n=F'(\phi(t^{n}))-F'(\phi^{n})$. It can be rewritten as
\begin{equation}
H^n=e_{\phi}^{n}\int_0^1 F''(s\phi(t^{n})+(1-s)\phi^{n}) ds.
\end{equation}
We have $\|H^n\|_{\Omega}\lesssim\|e_{\phi}^{n}\|_{\Omega}$ since $F''$ is bounded.
By taking the gradient of $H^n$, we have
\begin{equation}
\begin{aligned}
\nabla H^n&=F''(\phi(t^{n}))\nabla \phi(t^{n})-F''(\phi^{n})\nabla\phi^{n}=(F''(\phi(t^{n}))-F''(\phi^{n}))\nabla \phi(t^{n})+F''(\phi^{n})\nabla e_{\phi}^{n}
\end{aligned}
\end{equation}
Since $F''$ is bounded and Lipschitz and assumption (A), we have
\begin{equation}
\begin{aligned}
\|\nabla H^n\|_{\Omega}&\lesssim\|e_{\phi}^{n}\|_{\Omega}\|\phi(t^n)\|_{H^3(\Omega)}+\|\nabla e_{\phi}^{n}\|_{\Omega}\\
& \lesssim \|e_{\phi}^{n}\|_{\Omega}+\|\nabla e_{\phi}^{n}\|_{\Omega}.
\end{aligned}
\end{equation}
Similarly, we define $\tilde{H}^n=G'(\psi(t^{n}))-G'(\psi^{n})$ for simplicity. Since $G''$ is bounded and Lipschitz and assumption (A), we have
\begin{equation}
\begin{aligned}
&\|\tilde{H}^n\|_{\Gamma}\lesssim\|e_{\psi}^{n}\|_{\Gamma},\\
&\|\nabla_{\Gamma} \tilde{H}^n\|_{\Gamma}\lesssim\|e_{\psi}^{n}\|_{\Gamma}+\|\nabla_{\Gamma} e_{\psi}^{n}\|_{\Gamma}.
\end{aligned}
\end{equation}

For the term $A_1$, we have
\begin{equation}\label{A1}
\begin{aligned}
&\tau(R_{\phi}^{n+1},e_{\mu}^{n+1})_{\Omega}+\tau(R_{\psi}^{n+1},e_{\Gamma}^{n+1})_{\Gamma}\\
&=\tau(R_{\phi}^{n+1}, -\varepsilon\Delta e_{\phi}^{n+1}+\frac{1}{\varepsilon}H^n
+s_1(e_{\phi}^{n+1}-e_{\phi}^{n})+R_{\mu}^{n+1})_{\Omega}\\
&+\tau(R_{\psi}^{n+1}, -\delta\kappa\Delta_{\Gamma}e_{\psi}^{n+1}+\frac{1}{\delta}\tilde{H}^n
+\varepsilon\partial_{\mathbf{n}}e_{\phi}^{n+1}
+s_2(e_{\psi}^{n+1}-e_{\psi}^{n})+R_{\Gamma}^{n+1})_{\Gamma}\\
&=\varepsilon\tau(\nabla R_{\phi}^{n+1}, \nabla e_{\phi}^{n+1})_{\Omega}+\frac{\tau}{\varepsilon}(H^n, R_{\phi}^{n+1})_{\Omega}+
s_1\tau(R_{\phi}^{n+1}, e_{\phi}^{n+1}-e_{\phi}^{n})_{\Omega}\\
&+\tau(R_{\phi}^{n+1}, R_{\mu}^{n+1})_{\Omega}+\tau\delta\kappa(\nabla_{\Gamma}R_{\psi}^{n+1}, \nabla_{\Gamma} e_{\psi}^{n+1})_{\Gamma}+\frac{\tau}{\delta}(\tilde{H}^n, R_{\psi}^{n+1})_{\Gamma}\\
&+s_2\tau(R_{\psi}^{n+1}, e_{\psi}^{n+1}-e_{\psi}^{n})_{\Gamma}+\tau(R_{\psi}^{n+1},R_{\Gamma}^{n+1})_{\Gamma}\\
&\leq \varepsilon\tau \|\nabla R_{\phi}^{n+1}\|_{\Omega}\|\nabla e_{\phi}^{n+1}\|_{\Omega}+\frac{\tau}{\varepsilon}\|H^n\|_{\Omega}\| R_{\phi}^{n+1}\|_{\Omega}+s_1\tau\|R_{\phi}^{n+1}\|_{\Omega}\| e_{\phi}^{n+1}-e_{\phi}^{n}\|_{\Omega}\\
&+\tau\|R_{\phi}^{n+1}\|_{\Omega}\|R_{\mu}^{n+1}\|_{\Omega}+\tau\delta\kappa\|
\nabla_{\Gamma}R_{\psi}^{n+1}\|_{\Gamma}\|\nabla_{\Gamma} e_{\psi}^{n+1}\|_{\Gamma}+
\frac{\tau}{\delta}\|\tilde{H}^n\|_{\Gamma}\|R_{\psi}^{n+1}\|_{\Gamma}\\
&+s_2\tau\|R_{\psi}^{n+1}\|_{\Gamma}\|e_{\psi}^{n+1}-e_{\psi}^{n}\|_{\Gamma}+
\tau\|R_{\psi}^{n+1}\|_{\Gamma}\|R_{\Gamma}^{n+1}\|_{\Gamma}\\
&\leq \frac{\varepsilon\tau}2 \|\nabla R_{\phi}^{n+1}\|_{\Omega}^2+\frac{\varepsilon\tau}2\|\nabla e_{\phi}^{n+1}\|_{\Omega}^2+\frac{\tau}{2\varepsilon}\|H^n\|_{\Omega}^2+\frac{\tau}{2\varepsilon}
\|R_{\phi}^{n+1}\|_{\Omega}^2\\
&+\frac{s_1\tau}{2}\|R_{\phi}^{n+1}\|_{\Omega}^2+\frac{s_1\tau}{2}\| e_{\phi}^{n+1}-e_{\phi}^{n}\|_{\Omega}^2+\frac{\tau}{2}\|R_{\phi}^{n+1}\|_{\Omega}^2+
\frac{\tau}{2}\|R_{\mu}^{n+1}\|_{\Omega}^2\\
&+\frac{\tau\delta\kappa}{2}\|
\nabla_{\Gamma}R_{\psi}^{n+1}\|_{\Gamma}^2+\frac{\tau\delta\kappa}{2}\|\nabla_{\Gamma} e_{\psi}^{n+1}\|_{\Gamma}^2+\frac{\tau}{2\delta}\|\tilde{H}^n\|_{\Gamma}^2+\frac{\tau}{2\delta}
\|R_{\psi}^{n+1}\|_{\Gamma}^2\\
&+\frac{s_2\tau}{2}\|R_{\psi}^{n+1}\|_{\Gamma}^2+\frac{s_2\tau}{2}\|e_{\psi}^{n+1}-e_{\psi}^{n}\|_{\Gamma}^2
+\frac{\tau}{2}\|R_{\psi}^{n+1}\|_{\Gamma}^2+\frac{\tau}{2}\|R_{\Gamma}^{n+1}\|_{\Gamma}^2\\
&\leq C_1 \tau^3+\frac{\varepsilon\tau}2\|\nabla e_{\phi}^{n+1}\|_{\Omega}^2+C_2 \tau \| e_{\phi}^{n}\|_{\Omega}^2+\frac{s_1\tau}{2}\| e_{\phi}^{n+1}-e_{\phi}^{n}\|_{\Omega}^2\\
&+\frac{\tau\delta\kappa}{2}\|\nabla_{\Gamma} e_{\psi}^{n+1}\|_{\Gamma}^2+C_3 \tau\| e_{\psi}^{n}\|_{\Gamma}^2+\frac{s_2\tau}{2}\| e_{\psi}^{n+1}-e_{\psi}^{n}\|_{\Gamma}^2,
\end{aligned}
\end{equation}
where $C_i$ ($i=1,2,3$) are constants independent of $\tau$. Here, we use the estimates for $H^n$ and $\tilde{H}^n$ and the truncation terms $R_{\phi}^{n+1}$, $R_{\psi}^{n+1}$, $R_{\mu}^{n+1}$ and $R_{\Gamma}^{n+1}$.

For the terms in $A_2$, we have
\begin{equation}\label{A2_1}
\begin{aligned}
&-(R_{\mu}^{n+1},e_{\phi}^{n+1}-e_{\phi}^{n})_{\Omega}
=-\tau(R_{\mu}^{n+1},\frac{e_{\phi}^{n+1}-e_{\phi}^{n}}{\tau})_{\Omega}\\
&=-\tau(R_{\mu}^{n+1}, \Delta e_{\mu}^{n+1}+R_{\phi}^{n+1})_{\Omega}
=\tau(\nabla R_{\mu}^{n+1}, \nabla e_{\mu}^{n+1})_{\Omega}-\tau(R_{\mu}^{n+1},R_{\phi}^{n+1})_{\Omega}\\
&\leq \tau \|\nabla R_{\mu}^{n+1}\|_{\Omega}\|\nabla e_{\mu}^{n+1}\|_{\Omega}+\tau \|R_{\mu}^{n+1}\|_{\Omega}\|R_{\phi}^{n+1}\|_{\Omega}\\
&\leq 2\tau\|\nabla R_{\mu}^{n+1}\|_{\Omega}^2+\frac{\tau}8\|\nabla e_{\mu}^{n+1}\|_{\Omega}^2+\frac{\tau}2\| R_{\mu}^{n+1}\|_{\Omega}^2+\frac{\tau}2\| R_{\phi}^{n+1}\|_{\Omega}^2\\
&\leq C_4 \tau^3+\frac{\tau}8\|\nabla e_{\mu}^{n+1}\|_{\Omega}^2,
\end{aligned}
\end{equation}
and
\begin{equation}\label{A2_2}
\begin{aligned}
&-(R_{\Gamma}^{n+1},e_{\psi}^{n+1}-e_{\psi}^{n})_{\Gamma}
=-\tau(R_{\Gamma}^{n+1},\frac{e_{\psi}^{n+1}-e_{\psi}^{n}}{\tau})_{\Gamma}\\
&=-\tau(R_{\Gamma}^{n+1}, \Delta_{\Gamma} e_{\Gamma}^{n+1}+R_{\psi}^{n+1})_{\Gamma}
=\tau(\nabla_{\Gamma} R_{\Gamma}^{n+1}, \nabla_{\Gamma} e_{\Gamma}^{n+1})_{\Gamma}-\tau(R_{\Gamma}^{n+1},R_{\psi}^{n+1})_{\Gamma}\\
&\leq 2\tau\|\nabla_{\Gamma} R_{\Gamma}^{n+1}\|_{\Gamma}^2+\frac{\tau}8\|\nabla_{\Gamma} e_{\Gamma}^{n+1}\|_{\Gamma}^2+\frac{\tau}2\| R_{\Gamma}^{n+1}\|_{\Gamma}^2+\frac{\tau}2\| R_{\psi}^{n+1}\|_{\Gamma}^2\\
&\leq C_5 \tau^3+\frac{\tau}8\|\nabla_{\Gamma} e_{\Gamma}^{n+1}\|_{\Gamma}^2,
\end{aligned}
\end{equation}
where $C_i$ ($i=4,5$) are constants independent of $\tau$. Here, we use the estimates for the truncation terms $R_{\phi}^{n+1}$, $R_{\psi}^{n+1}$, $R_{\mu}^{n+1}$ and $R_{\Gamma}^{n+1}$.

We estimate $A_3$ as follows
\begin{equation}\label{A3}
\begin{aligned}
&-\varepsilon\tau(\nabla e_{\mu}^{n+1},\nabla e_{\phi}^{n+1})_{\Omega}-\delta\kappa\tau(\nabla_{\Gamma} e_{\Gamma}^{n+1},\nabla_{\Gamma} e_{\psi}^{n+1})_{\Gamma}\\
&\leq \varepsilon\tau\|\nabla e_{\mu}^{n+1}\|_{\Omega}\|\nabla e_{\phi}^{n+1}\|_{\Omega}+
\delta\kappa\tau \|\nabla_{\Gamma} e_{\Gamma}^{n+1}\|_{\Gamma}\|\nabla_{\Gamma} e_{\psi}^{n+1}\|_{\Gamma}\\
&\leq 2\varepsilon^2\tau\|\nabla e_{\phi}^{n+1}\|_{\Omega}^2+\frac{\tau}8 \|\nabla e_{\mu}^{n+1}\|_{\Omega}^2+2\delta^2\kappa^2\tau\|\nabla_{\Gamma} e_{\psi}^{n+1}\|_{\Gamma}^2+\frac{\tau}8\|\nabla_{\Gamma} e_{\Gamma}^{n+1}\|_{\Gamma}^2.
\end{aligned}
\end{equation}

For the term $A_4$, we have
\begin{equation}\label{A4}
\begin{aligned}
&\varepsilon\tau(R_{\phi}^{n+1},e_{\phi}^{n+1})_{\Omega}+\tau\delta\kappa(R_{\psi}^{n+1},e_{\psi}^{n+1})_{\Gamma}\\
&\leq\varepsilon\tau\|R_{\phi}^{n+1}\|_{\Omega}\|e_{\phi}^{n+1}\|_{\Omega}+\tau\delta\kappa
\|R_{\psi}^{n+1}\|_{\Gamma}\|e_{\psi}^{n+1}\|_{\Gamma}\\
&\leq C_6 \varepsilon\tau^3+\frac{\varepsilon\tau}{2}\|e_{\phi}^{n+1}\|_{\Omega}^2
+\frac{\delta\kappa\tau^3}{2}+\frac{\delta\kappa\tau}{2}\|e_{\psi}^{n+1}\|_{\Gamma}^2.
\end{aligned}
\end{equation}
Here, $C_6$ is a constant independent of $\tau$ and we use the estimates for the truncation terms $R_{\phi}^{n+1}$ and $R_{\psi}^{n+1}$.

For the terms in $A_5$, we have
\begin{equation}\label{A5_1}
\begin{aligned}
&-\frac{1}{\varepsilon}(F'(\phi(t^{n}))-F'(\phi^{n}), e_{\phi}^{n+1}-e_{\phi}^{n})_{\Omega}\\
&=-\frac{\tau}{\varepsilon}(H^n, \frac{e_{\phi}^{n+1}-e_{\phi}^{n}}{\tau})_{\Omega}
=-\frac{\tau}{\varepsilon}(H^n, \Delta e_{\mu}^{n+1}+R_{\phi}^{n+1})_{\Omega}\\
&=\frac{\tau}{\varepsilon}(\nabla H^n, \nabla e_{\mu}^{n+1})_{\Omega}-\frac{\tau}{\varepsilon}(H^n, R_{\phi}^{n+1})_{\Omega}\\
&\leq\frac{\tau}{\varepsilon}\|\nabla H^n\|_{\Omega}\|\nabla e_{\mu}^{n+1}\|_{\Omega}+
\frac{\tau}{\varepsilon}\|H^n\|_{\Omega}\|R_{\phi}^{n+1}\|_{\Omega}\\
&\leq C_7 \tau(\|e_{\phi}^{n}\|_{\Omega}+\|\nabla e_{\phi}^{n}\|_{\Omega})\|\nabla e_{\mu}^{n+1}\|_{\Omega}+C_8\tau\|e_{\phi}^{n}\|_{\Omega}\|R_{\phi}^{n+1}\|_{\Omega}\\
&\leq C_9\tau \|e_{\phi}^{n}\|_{\Omega}^2+2C_7^2\tau \|\nabla e_{\phi}^{n}\|_{\Omega}^2
+\frac{\tau}{4} \|\nabla e_{\mu}^{n+1}\|_{\Omega}^2 +C_{10} \tau^3,
\end{aligned}
\end{equation}
where $C_i$ ($i=7,8,9, 10$) are constants independent of $\tau$ and $C_9=2C_7^2+C_8/2$. Here, we use the estimates for $H^n$, $\nabla H^n$ and $R_{\phi}^{n+1}$.

\begin{equation}\label{A5_2}
\begin{aligned}
&-\frac{1}{\delta}(G'(\psi(t^{n}))-G'(\psi^{n}), e_{\psi}^{n+1}-e_{\psi}^{n})_{\Gamma}\\
&=-\frac{\tau}{\delta}(\tilde{H}^n, \frac{e_{\psi}^{n+1}-e_{\psi}^{n}}{\tau})_{\Gamma}
=-\frac{\tau}{\delta}(\tilde{H}^n, \Delta_{\Gamma} e_{\Gamma}^{n+1}+R_{\psi}^{n+1})_{\Gamma}\\
&=\frac{\tau}{\delta}(\nabla_{\Gamma}\tilde{H}^n,\nabla_{\Gamma} e_{\Gamma}^{n+1})_{\Gamma}-\frac{\tau}{\delta}(\tilde{H}^n,R_{\psi}^{n+1})_{\Gamma}\\
&\leq \frac{\tau}{\delta} \|\nabla_{\Gamma}\tilde{H}^n\|_{\Gamma}\|\nabla_{\Gamma}e_{\Gamma}^{n+1}\|_{\Gamma}+
\frac{\tau}{\delta}\| \tilde{H}^n\|_{\Gamma}\|R_{\psi}^{n+1}\|_{\Gamma}\\
&\leq C_{11} \tau( \|e_{\psi}^{n}\|_{\Gamma}+\|\nabla_{\Gamma} e_{\psi}^{n}\|_{\Gamma})\|\nabla_{\Gamma}e_{\Gamma}^{n+1}\|_{\Gamma}+C_{12}\tau
\|e_{\psi}^{n}\|_{\Gamma}\|R_{\psi}^{n+1}\|_{\Gamma}\\
&\leq C_{13}\tau\|e_{\psi}^{n}\|_{\Gamma}^2+2C_{11}^2\tau\|\nabla_{\Gamma} e_{\psi}^{n}\|_{\Gamma}^2+\frac{\tau}{4}\|\nabla_{\Gamma}e_{\Gamma}^{n+1}
\|_{\Gamma}^2+C_{14} \tau^3
\end{aligned}
\end{equation}
where $C_i$ ($i=11, 12, 13, 14$) are constants independent of $\tau$ and $C_{13}=2C_{11}^2+C_{12}/2$. Here, we use the estimates for $\tilde{H}^n$, $\nabla_{\Gamma} \tilde{H}^n$ and $R_{\psi}^{n+1}$.

Combine \eqref{errorzong} with \eqref{A1}, \eqref{A2_1}, \eqref{A2_2}, \eqref{A3}, \eqref{A4}, \eqref{A5_1} and \eqref{A5_2}, we derive
\begin{equation}\label{errorzong2}
\begin{aligned}
&\frac{\varepsilon}2(\| \nabla e_{\phi}^{n+1}\|_{\Omega}^2-\|\nabla e_{\phi}^{n}\|_{\Omega}^2+\| \nabla e_{\phi}^{n+1}-\nabla e_{\phi}^{n}\|_{\Omega}^2)
+\frac{\varepsilon}2(\| e_{\phi}^{n+1}\|_{\Omega}^2-\| e_{\phi}^{n}\|_{\Omega}^2+\| e_{\phi}^{n+1}- e_{\phi}^{n}\|_{\Omega}^2)\\
&+\frac{\delta\kappa}2(\| \nabla_{\Gamma} e_{\psi}^{n+1}\|_{\Gamma}^2-\|\nabla_{\Gamma} e_{\psi}^{n}\|_{\Gamma}^2+\| \nabla_{\Gamma} e_{\psi}^{n+1}-\nabla_{\Gamma} e_{\psi}^{n}\|_{\Gamma}^2)\\
&+\frac{\delta\kappa}2(\| e_{\psi}^{n+1}\|_{\Gamma}^2-\| e_{\psi}^{n}\|_{\Gamma}^2+\| e_{\psi}^{n+1}- e_{\psi}^{n}\|_{\Gamma}^2)
+s_1\|e_{\phi}^{n+1}-e_{\phi}^{n}\|_{\Omega}^2+s_2\|e_{\psi}^{n+1}-e_{\psi}^{n}\|_{\Gamma}^2\\
&+\frac{\tau}2(\|\nabla e_{\mu}^{n+1}\|_{\Omega}^2+\|\nabla_{\Gamma} e_{\Gamma}^{n+1}\|_{\Gamma}^2)\\
&\lesssim \tau^3+\tau(\| \nabla e_{\phi}^{n+1}\|_{\Omega}^2+\| \nabla e_{\phi}^{n}\|_{\Omega}^2+\|e_{\phi}^{n+1}\|_{\Omega}^2+\|e_{\phi}^{n}\|_{\Omega}^2+\| e_{\phi}^{n+1}- e_{\phi}^{n}\|_{\Omega}^2\\
&+\| \nabla_{\Gamma} e_{\psi}^{n+1}\|_{\Gamma}^2+\| \nabla_{\Gamma} e_{\psi}^{n}\|_{\Gamma}^2+\|e_{\psi}^{n+1}\|_{\Gamma}^2+\|e_{\psi}^{n}\|_{\Gamma}^2+\| e_{\psi}^{n+1}- e_{\psi}^{n}\|_{\Gamma}^2).
\end{aligned}
\end{equation}

Summing \eqref{errorzong2} together for $n=0$ to $m$, we derive
\begin{equation}\label{sum}
\begin{aligned}
&\frac{\varepsilon}2\| \nabla e_{\phi}^{m+1}\|_{\Omega}^2+\frac{\varepsilon}2\| e_{\phi}^{m+1}\|_{\Omega}^2+\frac{\delta\kappa}2\| \nabla_{\Gamma} e_{\psi}^{m+1}\|_{\Gamma}^2+\frac{\delta\kappa}2\| e_{\psi}^{m+1}\|_{\Gamma}^2\\
&+\sum_{n=0}^m \bigg{(} \frac{\varepsilon}2\| \nabla e_{\phi}^{n+1}-\nabla e_{\phi}^{n}\|_{\Omega}^2 +(\frac{\varepsilon}2+s_1)\| e_{\phi}^{n+1}- e_{\phi}^{n}\|_{\Omega}^2\\
&+\frac{\delta\kappa}2 \| \nabla_{\Gamma} e_{\psi}^{n+1}-\nabla_{\Gamma} e_{\psi}^{n}\|_{\Gamma}^2 +(\frac{\delta\kappa}2+s_2)\| e_{\psi}^{n+1}- e_{\psi}^{n}\|_{\Gamma}^2\\
&+ \frac{\tau}2(\|\nabla e_{\mu}^{n+1}\|_{\Omega}^2+\|\nabla_{\Gamma} e_{\Gamma}^{n+1}\|_{\Gamma}^2)  \bigg{)}\\
&\leq \tilde{C}(m+1)\tau^3+\tilde{C}\tau\sum_{n=0}^m\bigg{(}\| \nabla e_{\phi}^{n+1}\|_{\Omega}^2+\|e_{\phi}^{n+1}\|_{\Omega}^2+\| e_{\phi}^{n+1}- e_{\phi}^{n}\|_{\Omega}^2\\
&+\|\nabla_{\Gamma} e_{\psi}^{n+1}\|_{\Gamma}^2+\|e_{\psi}^{n+1}\|_{\Gamma}^2+\| e_{\psi}^{n+1}- e_{\psi}^{n}\|_{\Gamma}^2\bigg{)},
\end{aligned}
\end{equation}
where we use $e_{\phi}^0=e_{\psi}^0=\nabla e_{\phi}^0=\nabla_{\Gamma} e_{\psi}^0=0$.

Denote
\begin{equation}\label{Idef}
\begin{aligned}
I_m&=\frac{\varepsilon}2\| \nabla e_{\phi}^{m+1}\|_{\Omega}^2+\frac{\varepsilon}2\| e_{\phi}^{m+1}\|_{\Omega}^2+\frac{\delta\kappa}2\| \nabla_{\Gamma} e_{\psi}^{m+1}\|_{\Gamma}^2+\frac{\delta\kappa}2\| e_{\psi}^{m+1}\|_{\Gamma}^2\\
&+(\frac{\varepsilon}2+s_1)\| e_{\phi}^{m+1}- e_{\phi}^{m}\|_{\Omega}^2+(\frac{\delta\kappa}2+s_2)\| e_{\psi}^{m+1}- e_{\psi}^{m}\|_{\Gamma}^2
\end{aligned}
\end{equation}
and
\begin{equation}\label{Sdef}
\begin{aligned}
S_m&=\sum_{n=0}^m \bigg{(} \frac{\varepsilon}2\| \nabla e_{\phi}^{n+1}-\nabla e_{\phi}^{n}\|_{\Omega}^2+\frac{\delta\kappa}2 \| \nabla_{\Gamma} e_{\psi}^{n+1}-\nabla_{\Gamma} e_{\psi}^{n}\|_{\Gamma}^2\\
&+ \frac{\tau}2(\|\nabla e_{\mu}^{n+1}\|_{\Omega}^2+\|\nabla_{\Gamma} e_{\Gamma}^{n+1}\|_{\Gamma}^2)  \bigg{)}.
\end{aligned}
\end{equation}
Then we have
\begin{equation}\label{gronwall}
I_m+S_m\lesssim\tau^2+\tau\sum_{n=0}^m I_n.
\end{equation}

According to the discrete Gronwall's inequality, there exists constants $\tilde{c}_0$ and $C_0$, such that
\begin{equation}\label{gronwall2}
I_m+S_m\leq\tilde{c}_0\tau^2,
\end{equation}
where $\tilde{c}_0$ is independent of $\tau$ and $\tau \leq C_0$.
And thus the error estimate \eqref{finalestimate} holds for $e_{\phi}^{m+1}$ and $e_{\psi}^{m+1}$.
\end{proof}

\begin{remark}
When the surface diffusion is absent, namely, when $\kappa=0$, the scheme \eqref{SIscheme1}-\eqref{SIscheme6} is also valid and the energy stability and error estimates also hold. In this case, we only need to let $\kappa=0$ in Eq. \eqref{SIscheme6} to get the corresponding numerical scheme. Moreover, the proof for the stability and error estimates are similar to those mentioned above. The only difference is to let $\kappa=0$ in the proof. Thus, we omit the details here and leave it to interested readers.


\end{remark}

\section{Numerical simulations\label{s5}}

In this section, we present numerical experiments of the Liu-Wu model by implementing
the developed scheme \eqref{SIscheme1}-\eqref{SIscheme6}. The numerical examples include the comparison with the numerical results in \cite{knopf2019} and \cite{Garcke2020}, accuracy tests with respect to the time step size, and the simulation of the shape deformation of a square shaped droplet.

The discrete energy and mass are defined as
\begin{equation}
\begin{aligned}
E(\phi^{n},\psi^{n})&=E_{bulk}(\phi^{n})+E_{surf}(\psi^{n})\\
&=\int_{\Omega}\frac{1}{\varepsilon} F(\phi^{n})+\frac{\varepsilon}{2} |\nabla\phi^{n}|^2\mbox{d}x+\int_{\Gamma} \frac{1}{\delta} G(\psi^{n})+\frac{\delta\kappa}{2}|\nabla_{\Gamma}\psi^{n}|^2\mbox{d}S,
\end{aligned}
\end{equation}
\begin{equation}
\begin{aligned}
M(\phi^{n},\psi^{n})&=M^{bulk}(\phi^{n})+M^{surf}(\psi^{n})\\
&=\int_{\Omega}\phi^{n}\mbox{d}x+\int_{\Gamma} \psi^{n}\mbox{d}S.
\end{aligned}
\end{equation}
The time evolutions of energy and mass are plotted in this section to validate the stability of the numerical scheme and the conservation of mass.

In this section, we present the numerical simulations in two dimensions.
For the spatial operators, we use the second-order central finite difference
method to discretize them over a uniform spatial grid.

\subsection{Comparison with former work}

We reconstruct the numerical experiments in \cite{knopf2019} and \cite{Garcke2020}
to validate the accuracy and robustness of our scheme.
\subsubsection{Comparison with numerical experiments in \cite{knopf2019}}

Firstly, we consider the initial condition
\begin{equation}\label{lam4initial}
\phi_0(x,y)=\left\{\begin{aligned}
&1 \qquad \mbox{if} \ x>1/2,\\
&-1\quad \mbox{if} \ x\leq1/2,
\end{aligned}
\right.
\end{equation}
where $\mathbf{x}=(x,y)\in[0,1]^2$, which is plotted in Fig. \ref{lam34initial}. The time step size $\tau=10^{-5}$ and the spatial step size $h=0.01$.
The parameters are set the same as those in Section 7.2.1 of \cite{knopf2019}:
$\varepsilon=1$, $\delta=0.1$, $\kappa=1$. $F$ and $G$ are chosen to be the classical double-well potential \eqref{classicalF}. We set $s_1=1$, $s_2=10$ to make sure that the scheme \eqref{SIscheme1}-\eqref{SIscheme6} is energy stable.

We observe that the numerical solutions are almost constant in the orthogonal direction. Thus, the projection of the numerical solution on the line $y=1/2$ after 200 time steps is plotted in Fig. \ref{lam4}, indicating the dissipation in the bulk. It is consistent with the results in \cite{knopf2019}. The energy and mass evolution with respect to time are also shown in Fig. \ref{lam4}, revealing the energy stability and the mass conservation both in the bulk and on the boundary.
\begin{figure}
\centering
\includegraphics[scale=0.25]{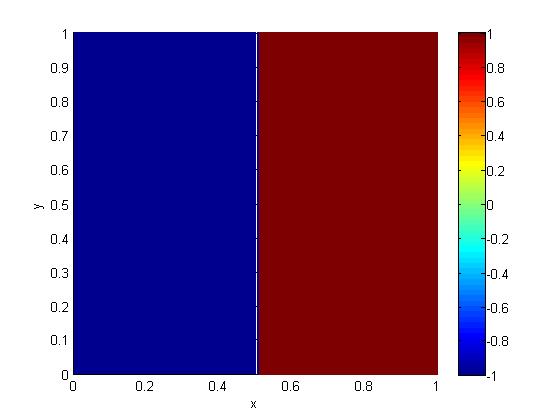}
\includegraphics[scale=0.25]{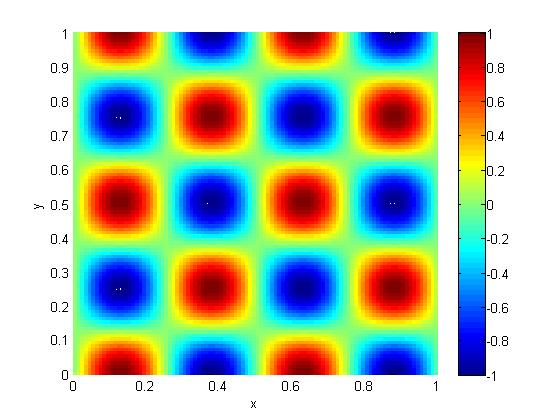}
\caption{The initial data of \eqref{lam4initial} and \eqref{lamsincosinitial}.}
\label{lam34initial}
\end{figure}
\begin{figure}
\centering
\includegraphics[scale=0.22]{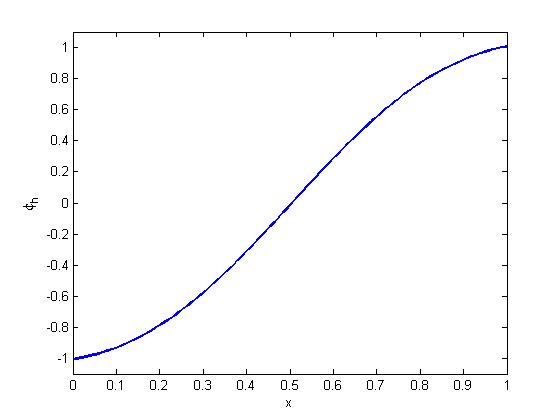}
\includegraphics[scale=0.22]{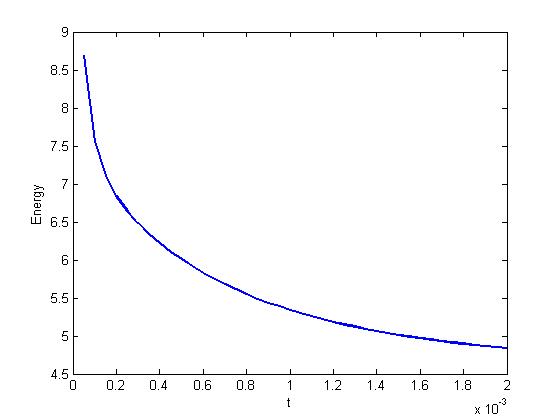}
\includegraphics[scale=0.22]{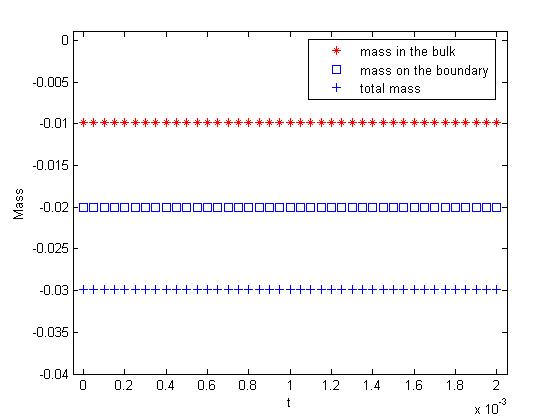}
\caption{For the initial data \eqref{lam4initial}, the projection of the numerical solution on the line $y=1/2$ after 200 time steps (left), energy evolution (middle) and mass evolution (right).}
\label{lam4}
\end{figure}

Next, we consider the initial data
\begin{equation}\label{lamsincosinitial}
\phi_0(x,y)=\sin(4\pi x)\cos(4\pi y),
\end{equation}
where $\mathbf{x}=(x,y)\in[0,1]^2$, which is also plotted in Fig. \ref{lam34initial}. The time step size $\tau=10^{-5}$ and the spatial step size $h=0.01$.

The parameters are set the same as those in Section 7.2.2 of \cite{knopf2019}:
$\varepsilon=\delta=0.02$, $\kappa=1$. $F$ and $G$ are chosen to be the classical double-well potential \eqref{classicalF}. We set $s_1=s_2=50$ to make sure that the scheme \eqref{SIscheme1}-\eqref{SIscheme6} is energy stable.
The numerical solution after 100 time steps is plotted in Fig. \ref{lamsincos}, which is qualitatively consistent with the numerical results in \cite{knopf2019}.
The energy and mass evolutions are also plotted in Fig. \ref{lamsincos}, showing the energy stability of the numerical scheme and the mass conservation both in the bulk and on the boundary.

\begin{figure}
\centering
\includegraphics[scale=0.22]{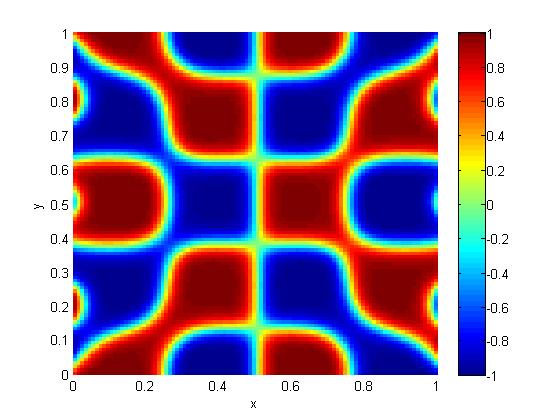}
\includegraphics[scale=0.22]{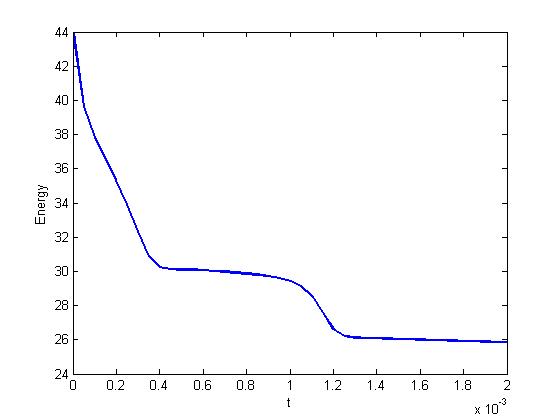}
\includegraphics[scale=0.22]{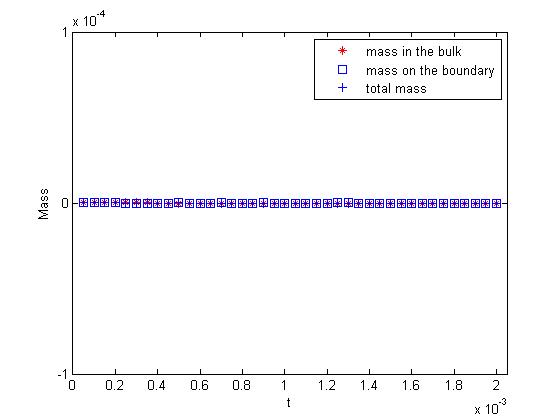}
\caption{ Numerical results of Liu-Wu model with the initial data \eqref{lamsincosinitial} after 100 time steps (left), energy evolution (middle) and mass evolution (right).}
\label{lamsincos}
\end{figure}

\subsubsection{Comparison with numerical experiments in \cite{Garcke2020}}

Firstly, the parameters are the same as those of the first numerical simulation in \cite{Garcke2020}: $\varepsilon=\delta=0.02$, $\kappa=0.02$. $\Omega$ is the unit square and the spatial step size $h=0.01$.
$F$ and $G$ are chosen to be the classical double-well potential \eqref{classicalF}, and we set $s_1=s_2=100$ to make sure that the scheme \eqref{SIscheme1}-\eqref{SIscheme6} is energy stable.
The time step size is set as $\tau=8\times10^{-6}$. The initial data $\phi_0$ is set to be zero in the bulk and set to be one on the boundary.

The energy evolution and the mass evolutions are plotted in Fig. \ref{lam12energy} and \ref{lam1mass}. The energy decreases with respect to time, indicating the stability of the scheme.
The masses in the bulk and on the boundary are conserved respectively, which is consistent with the properties of Liu-Wu model.
The numerical solutions after 5, 15, 80, 200, 500 and 2500 time steps are shown in Fig. \ref{01}. Due to the mass conservation on the boundary, the numerical solution remain to be 1 on the boundary. A wave-like structure arises during the phase separation in the bulk, and  ultimately, a circle of the phase of value -1 appears at the center of $\Omega$. The numerical results are consistent with the former work \cite{Garcke2020}.

\begin{figure}
\centering
\includegraphics[scale=0.33]{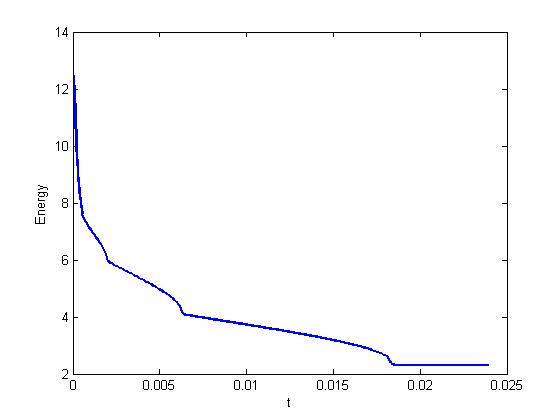}
\includegraphics[scale=0.33]{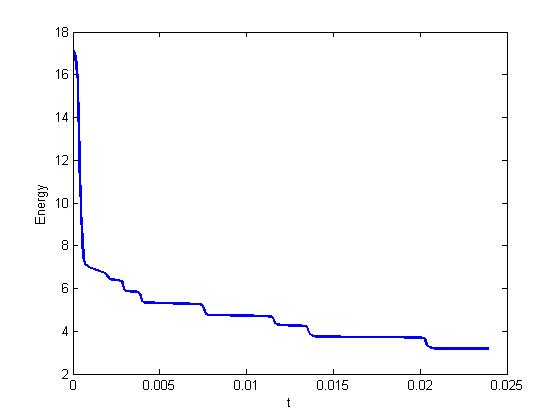}
\caption{Energy evolution of Liu-Wu model with the initial data of 0 in the bulk and 1 on the boundary (left) and the initial data of random values between -0.1 and 0.1 in the bulk and random values between 0.4 and 0.6 on the boundary (right).}
\label{lam12energy}
\end{figure}
\begin{figure}
\centering
\includegraphics[scale=0.2]{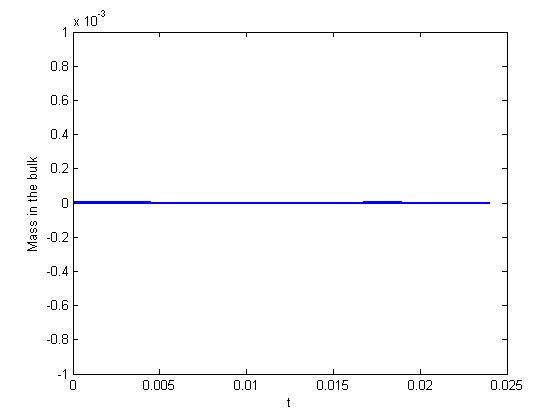}
\includegraphics[scale=0.2]{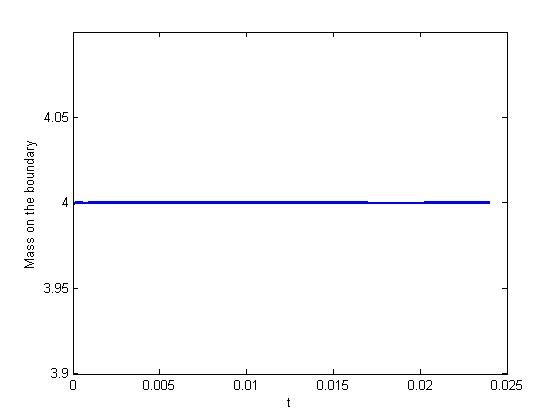}
\includegraphics[scale=0.2]{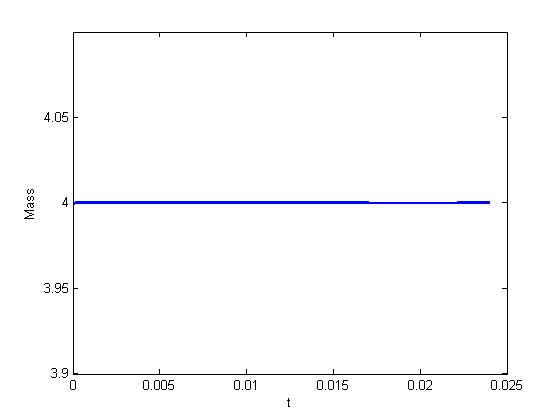}
\caption{Mass evolution of Liu-Wu model with the initial data of 0 in the bulk and 1 on the boundary: mass in the bulk(left), mass on the boundary(middle) and total mass(right).}
\label{lam1mass}
\end{figure}

\begin{figure}
\centering
\includegraphics[scale=0.2]{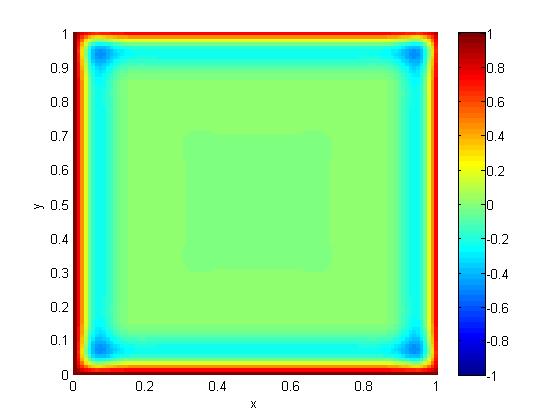}
\includegraphics[scale=0.2]{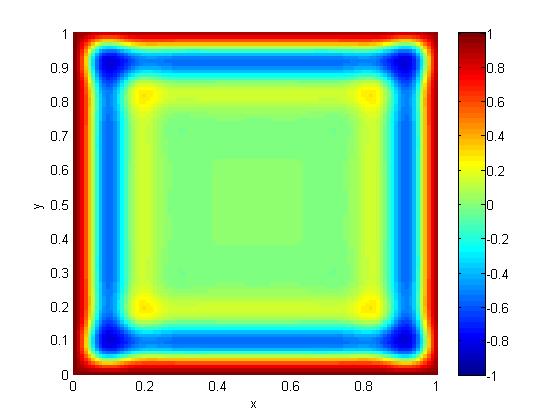}
\includegraphics[scale=0.2]{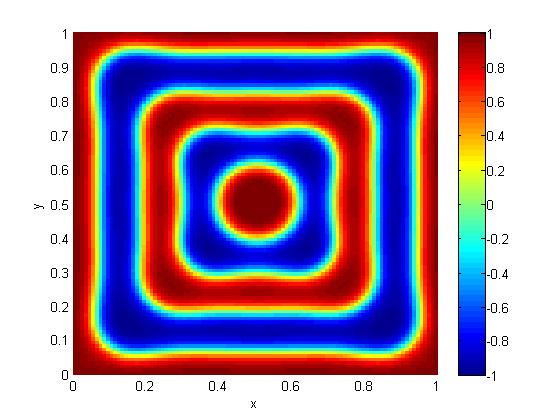}
\includegraphics[scale=0.2]{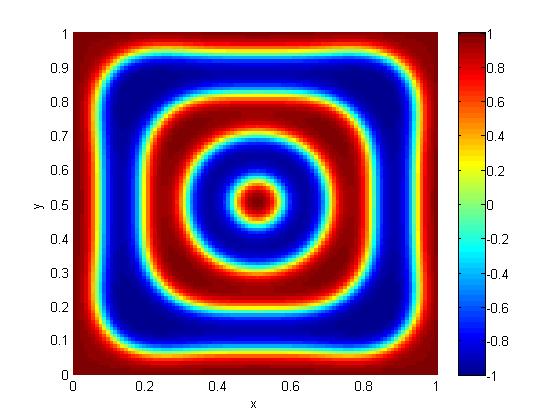}
\includegraphics[scale=0.2]{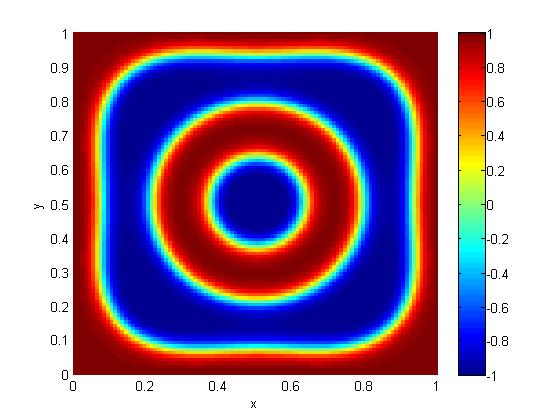}
\includegraphics[scale=0.2]{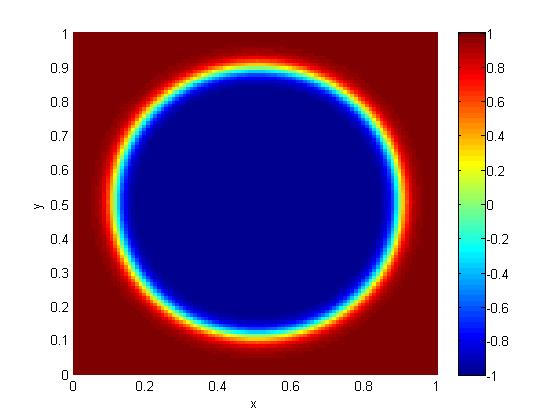}
\caption{Numerical results of Liu-Wu model with the initial data of 0 in the bulk and 1 on the boundary.}
\label{01}
\end{figure}

Secondly, the parameters are the same as those of the second numerical simulation in \cite{Garcke2020}: $\varepsilon=\delta=0.02$, $\kappa=0.075$, $\Omega=[0.5, 0.5]^2$ and the spatial step size $h=0.005$. The initial data is set as random values between -0.1 and 0.1 in the bulk and random values between 0.4 and 0.6 on the boundary. The numerical solutions after 5, 15, 50, 150, 300 and 3000 time steps are shown in Fig. \ref{lam2}. The energy evolution and the mass evolutions are plotted in Fig. \ref{lam12energy} and \ref{lam2mass}. The numerical results are consistent with \cite{Garcke2020}.

%
\begin{figure}
\centering
\includegraphics[scale=0.2]{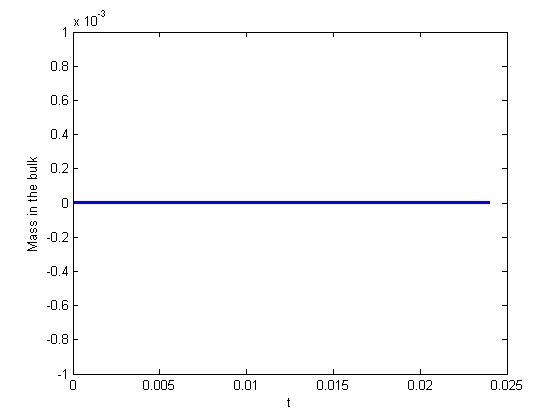}
\includegraphics[scale=0.2]{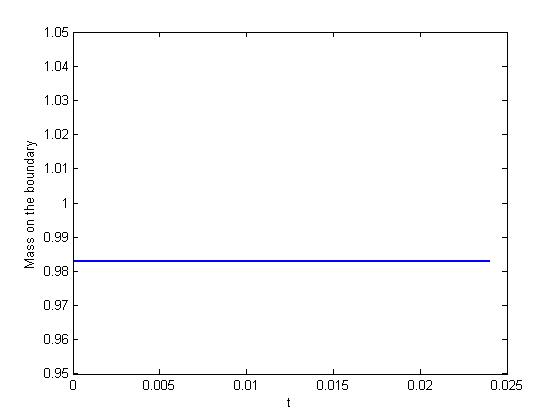}
\includegraphics[scale=0.2]{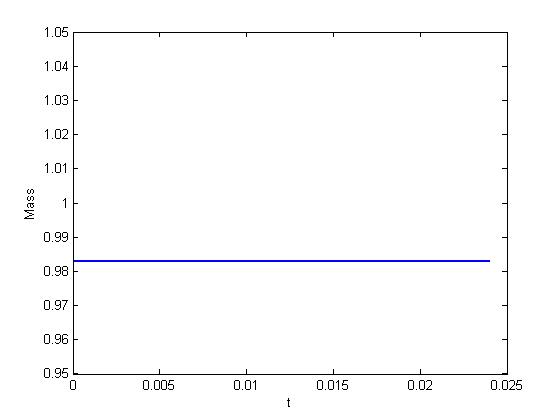}
\caption{Mass evolution of Liu-Wu model with the initial data of random values between -0.1 and 0.1 in the bulk and random values between 0.4 and 0.6 on the boundary: mass in the bulk(left), mass on the boundary(middle) and total mass(right).}
\label{lam2mass}
\end{figure}

\begin{figure}
\centering
\includegraphics[scale=0.2]{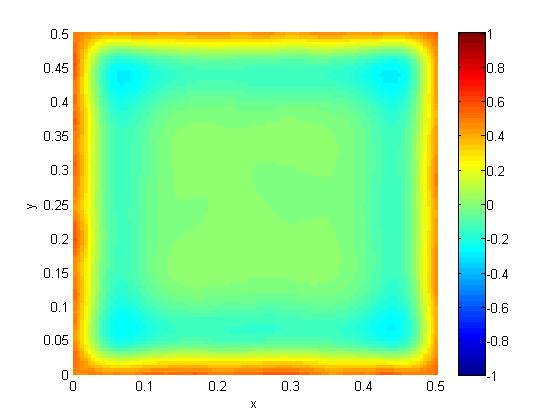}
\includegraphics[scale=0.2]{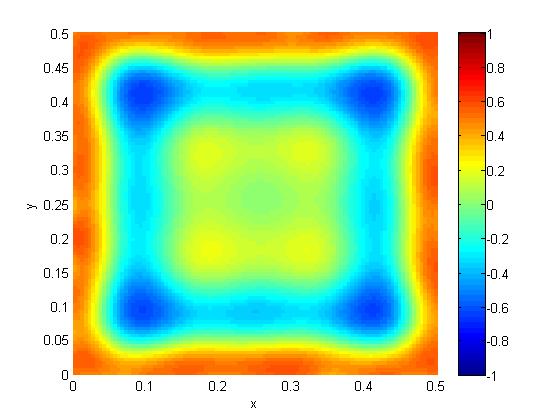}
\includegraphics[scale=0.2]{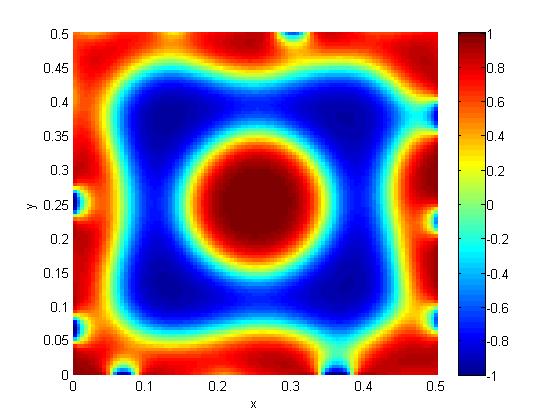}
\includegraphics[scale=0.2]{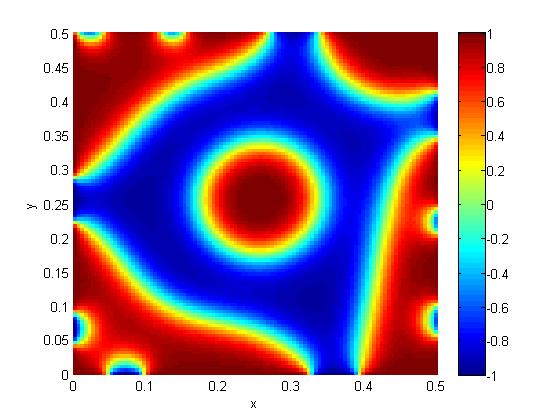}
\includegraphics[scale=0.2]{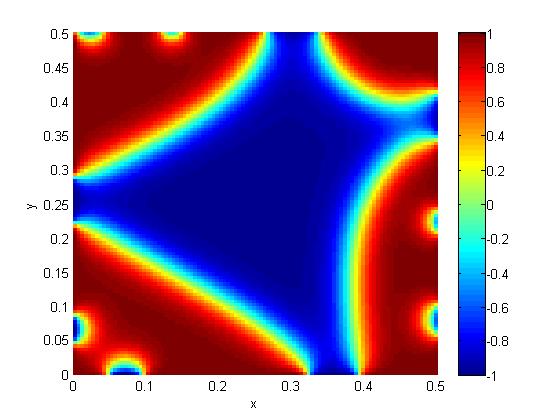}
\includegraphics[scale=0.2]{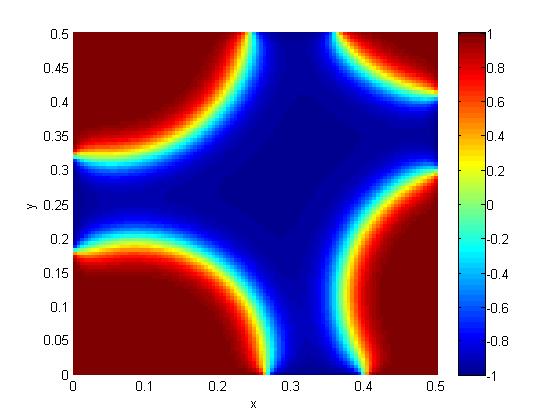}
\caption{Numerical results of Liu-Wu model with the initial data of random values between -0.1 and 0.1 in the bulk and random values between 0.4 and 0.6 on the boundary.}
\label{lam2}
\end{figure}

\subsection{Accuracy test}

We present in this section numerical accuracy test using the scheme \eqref{SIscheme1}-\eqref{SIscheme6} to support our error analysis. Let $\Omega$ to be the unit square, the spatial step size $h=0.01$ and the time steps $\tau=0.1,0.05,0.025,0.0125,6.25\times10^{-3}, 3.125\times10^{-3}$. The parameters are chosen as $\varepsilon=\delta=0.02$, $\kappa=1$ and $s_1=s_2=100$. The initial data is set to be zero in the bulk and set to be one on the boundary.
In this section, we choose $F$ and $G$ to be
\begin{equation}\label{FG}
F(\phi)=G(\phi)=\left\{\begin{aligned}
&(\phi-1)^2 \qquad \phi>1,\\
&\frac{1}4(\phi^2-1)^2 \quad -1\leq\phi<1,\\
&(\phi+1)^2 \qquad \phi\leq-1,
\end{aligned}
\right.
\end{equation}
which is modified from the classical double-well potential \eqref{classicalF}. Thus,
the Lipschitz property holds for their derivatives
\begin{equation}
\max_{\phi\in\mathbb{R}} |F''(\phi)|=\max_{\psi\in\mathbb{R}} |G''(\psi)|\leq2,
\end{equation}
which is necessary for the error estimates.

The errors are calculated as the difference between the solution of the coarse time step and that of the reference time step $\tau^*=10^{-4}$. In Fig. \ref{accuracytest} , we plot the $L^2$ errors of $\phi$ and $\psi$ between the numerical solution and the reference solution at $T = 0.5$ with different time step sizes.
The results show clearly that the convergence rate of the numerical scheme
is asymptotically first-order temporally for $\phi$ and $\psi$, which is consistent with our numerical analysis in Section \ref{s4}.

\begin{figure}
\centering
\includegraphics[scale=0.4]{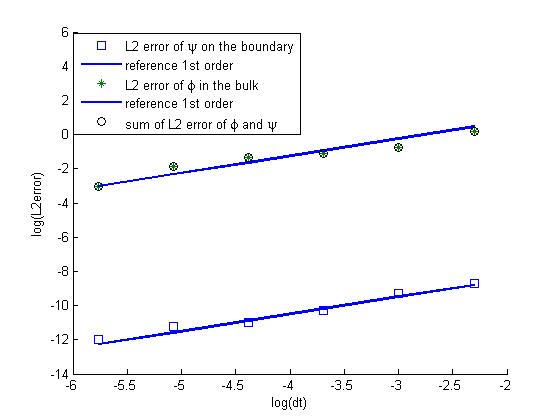}
\caption{The $L^2$ numerical errors for $\phi$ and $\psi$ at $T=0.5$.}
\label{accuracytest}
\end{figure}

\subsection{Shape deformation of a droplet}\label{droplet}

In this section, we consider the domain $\Omega=[0,1]^2$ and place a square shaped droplet with center at $(0.5, 0.25)$ and the length of each side to be 0.5 (see Fig. \ref{lam3_0}). The phase inside the droplet is set to be 1 and outside the droplet to be -1. $F$ and $G$ are chosen to be of the form \eqref{classicalF}. And the parameters are set as
$$
\varepsilon=\delta=0.02, \  \kappa=1, \  s_1=s_2=100.
$$
We simulate the behaviour of the droplet from $t = 0$ to $T =0.5$ with the time step $\tau=2\times10^{-4}$ and the spatial step size $h=0.01$.

The energy evolution and the mass evolutions are shown in Fig. \ref{lam3massandenergy}, revealing the decrease of the total energy and the conservation of mass in the bulk and on the boundary, respectively.
The time evolution of the droplet after 10, 50, 100, 500, 1000 and 2500 time steps are plotted in Fig. \ref{lam3}.
It's shown that the square shaped droplet evolves to attain the circular shape with constant mean curvature. Moreover,
the contact area of the droplet and the boundary doesn't change due to the conservation of mass, which is consistent with the previous work \cite{knopf2020}.

\begin{figure}
\centering
\includegraphics[scale=0.25]{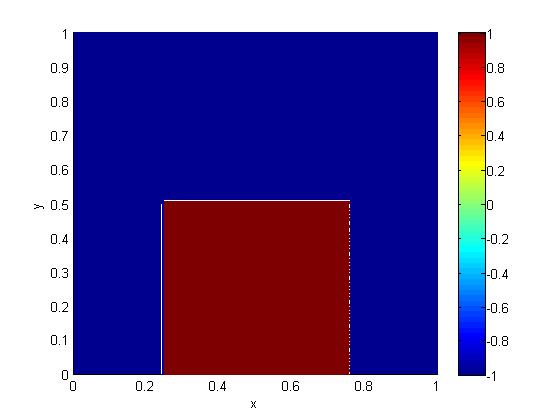}
\caption{The initial data of the square shaped droplet.}
\label{lam3_0}
\end{figure}

\begin{figure}
\centering
\includegraphics[scale=0.2]{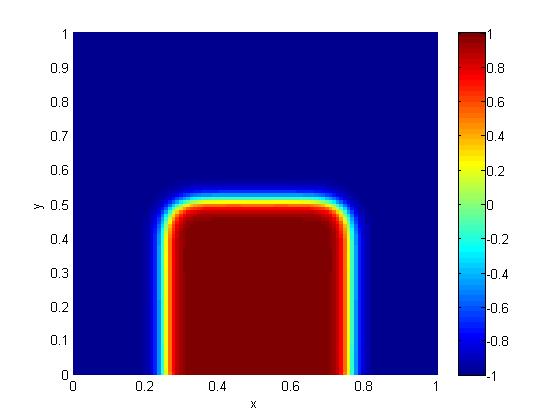}
\includegraphics[scale=0.2]{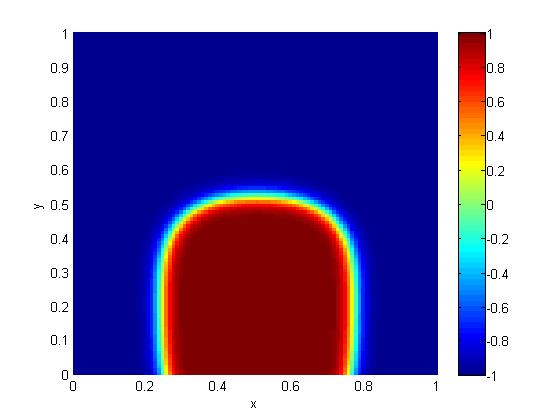}
\includegraphics[scale=0.2]{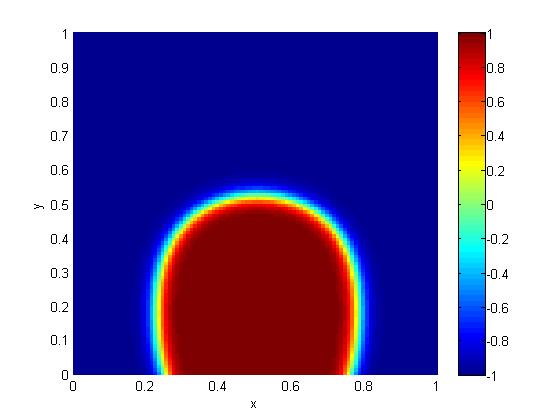}
\includegraphics[scale=0.2]{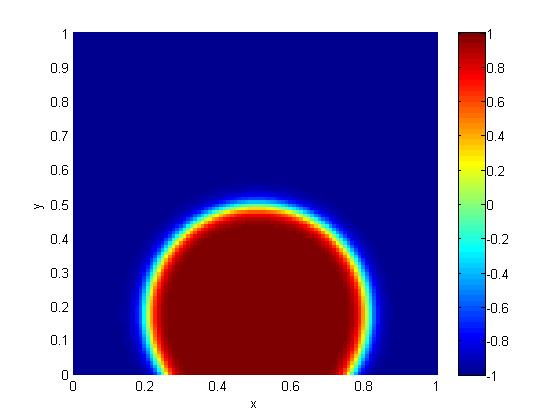}
\includegraphics[scale=0.2]{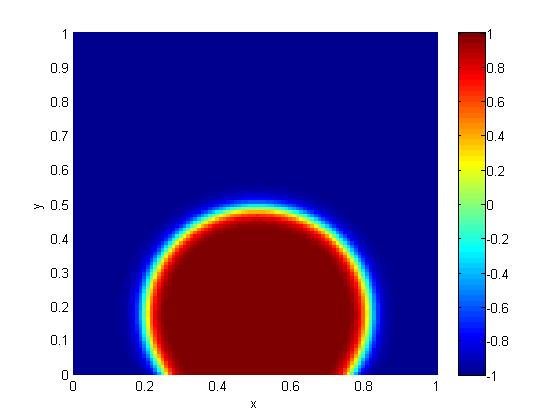}
\includegraphics[scale=0.2]{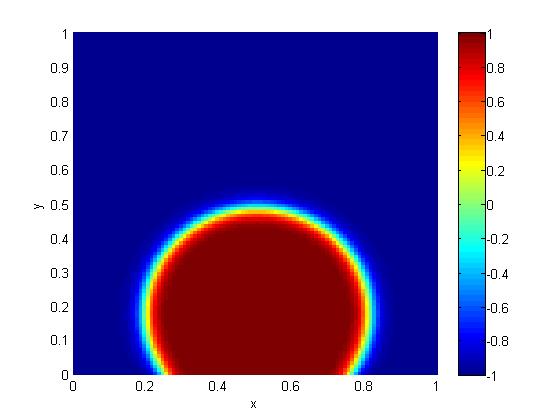}
\caption{Numerical results of Liu-Wu model with the initial data of a square shaped droplet.}
\label{lam3}
\end{figure}

\begin{figure}
\centering
\includegraphics[scale=0.33]{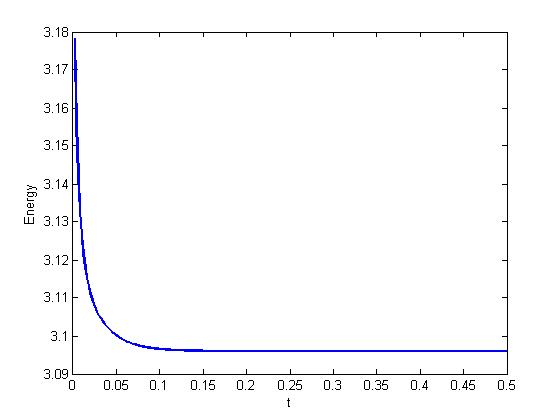}
\includegraphics[scale=0.33]{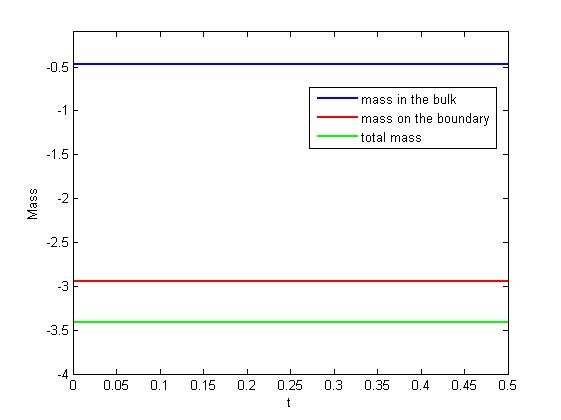}
\caption{Energy and mass evolution of Liu-Wu model with the initial data of a square shaped droplet: energy evolution(left) and mass evolution(right).}
\label{lam3massandenergy}
\end{figure}

\subsection{Comparison with different potentials}

In the numerical experiments mentioned above, the surface potential $G$ is chosen polynomial. In this section, we simulate the shape deformation of the droplet with different surface potentials. For simplicity, we denote the classical double-well potential as $G_1$, namely,
$$
G_1(\phi)=\frac{1}4(\phi^2-1)^2.
$$
And we denote the typical potential for moving contact line problems as $G_2$, namely,
$$
G_2(\phi)=\frac{\gamma}2 \cos(\theta_s)\sin(\frac{\pi}2 \phi),
$$
where $\theta_s$ stands for the static contact angle. In this section, we simulate the shape deformation of the droplet (with the initial data as in Fig. \ref{lam3_0}) with the surface potentials $G_1$ and $G_2$.

In Section \ref{droplet}, we have shown the shape deformation of the droplet, the energy and mass evolution with the surface potential $G_1$. In order to make comparisons between the two surface potentials, we choose the same parameters as those in Section \ref{droplet} with $\gamma=\frac{2\sqrt{2}}{3}$ and $\tau=1e-5$.
The energy and mass evolutions of the cases with the surface potential $G_2$ (with $\cos \theta_s=\pm \frac{1}2$) are shown in Fig. \ref{mass_60and120}, indicating
the decrease of the total energy and the conservation of mass.
The time evolution of the droplet with $G_1$ and $G_2$ after 50, 100, 200, 500, 800 and 1000 time steps are plotted in Fig. \ref{g}, \ref{60} and \ref{120}. In the case of $G_2$, the square shaped droplet also evolves to attain the circular shape, which is the same as the case that $G$ is polynomial. However, note that the contact area of the droplet and the boundary changes, which is different from the case of $G_1$. Thus, due to the conservation of mass both in the bulk and on the boundary (as shown in Fig. \ref{mass_60and120}), the values of the phase-field order parameter $\phi$ and $\psi$ are not confined in $[-1, +1]$.

\begin{figure}
\centering
\includegraphics[scale=0.2]{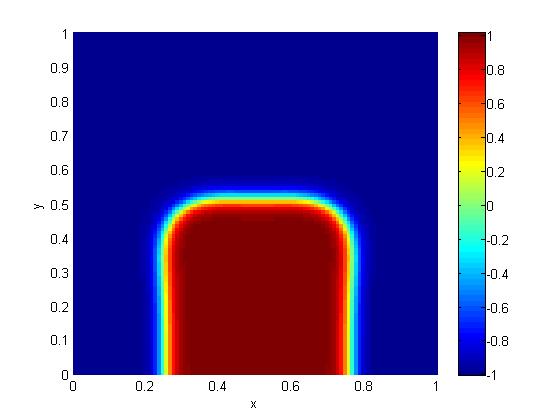}
\includegraphics[scale=0.2]{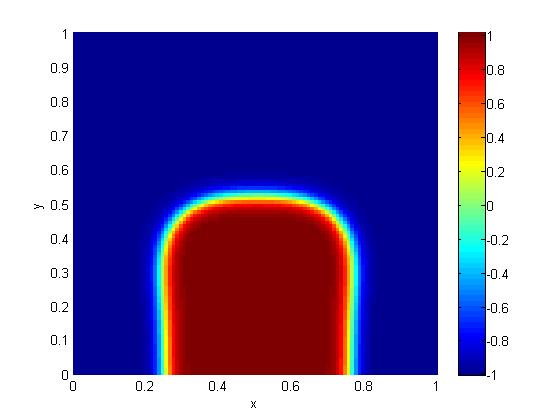}
\includegraphics[scale=0.2]{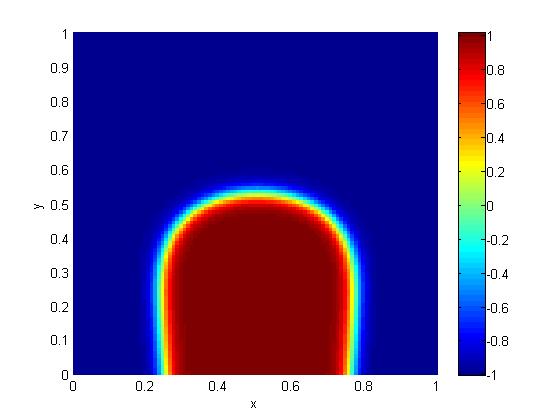}
\includegraphics[scale=0.2]{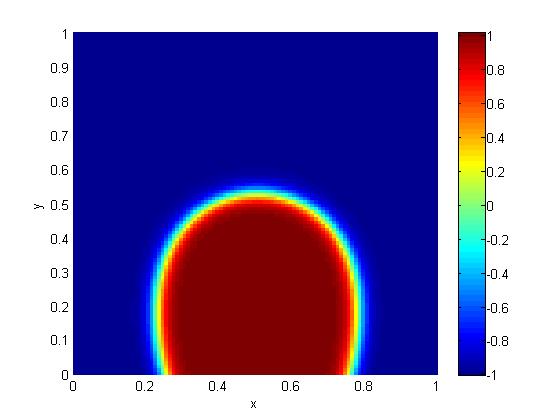}
\includegraphics[scale=0.2]{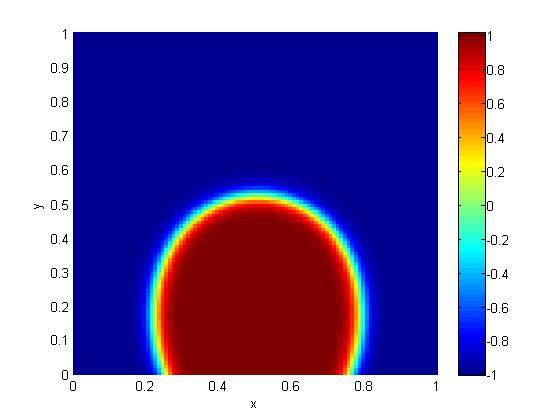}
\includegraphics[scale=0.2]{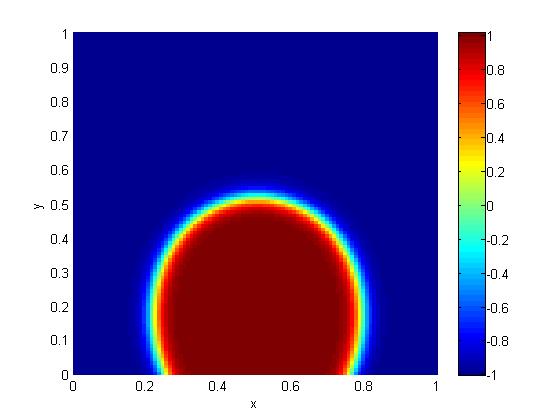}
\caption{Numerical results of Liu-Wu model with the surface potential $G_1$.}
\label{g}
\end{figure}

\begin{figure}
\centering
\includegraphics[scale=0.2]{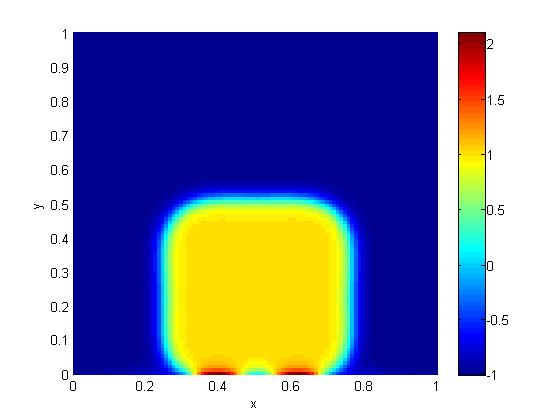}
\includegraphics[scale=0.2]{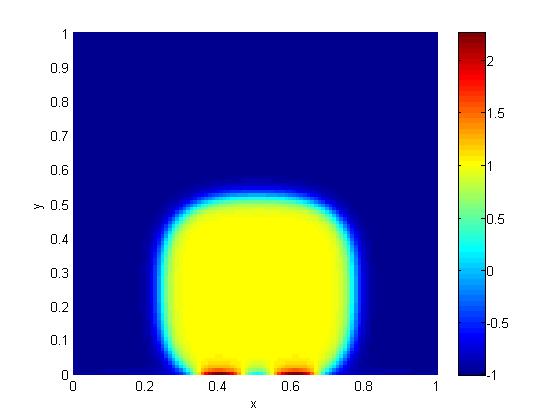}
\includegraphics[scale=0.2]{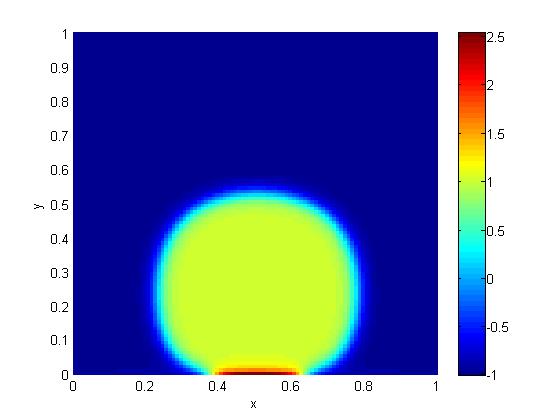}
\includegraphics[scale=0.2]{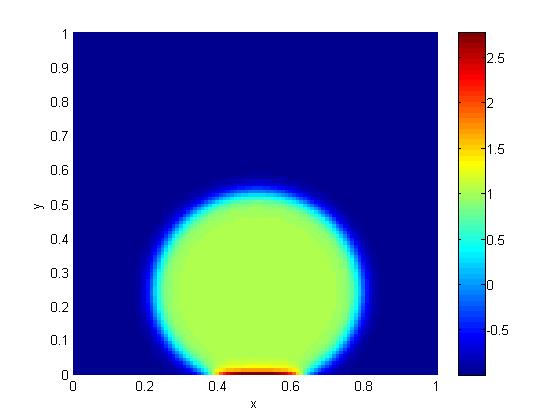}
\includegraphics[scale=0.2]{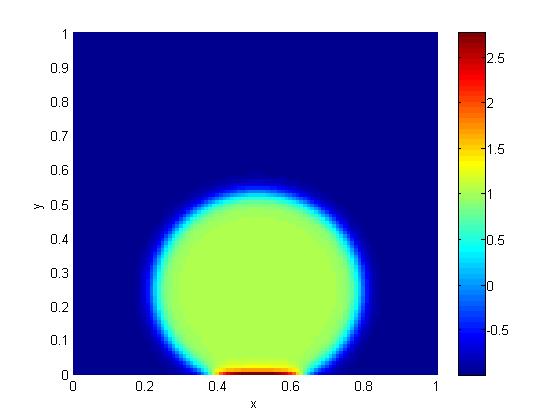}
\includegraphics[scale=0.2]{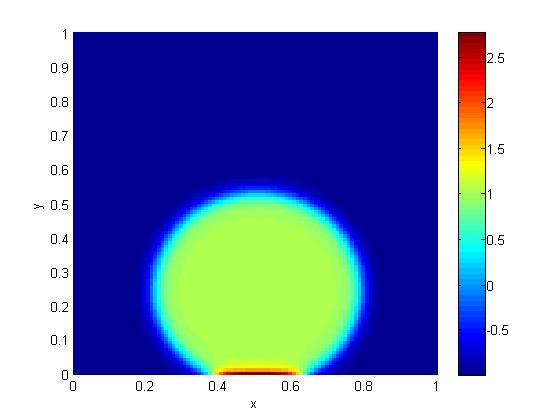}
\caption{Numerical results of Liu-Wu model with the surface potential $G_2$ ($\cos \theta_s=\frac{1}2$).}
\label{60}
\end{figure}

\begin{figure}
\centering
\includegraphics[scale=0.2]{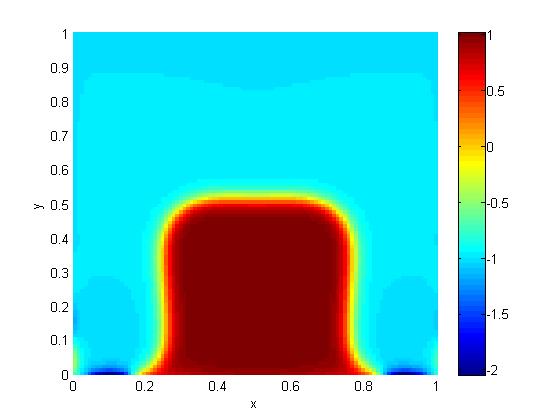}
\includegraphics[scale=0.2]{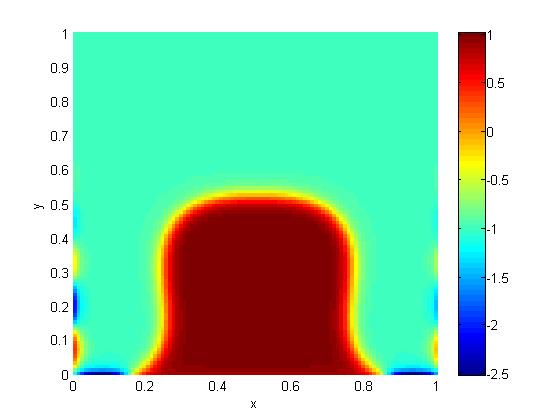}
\includegraphics[scale=0.2]{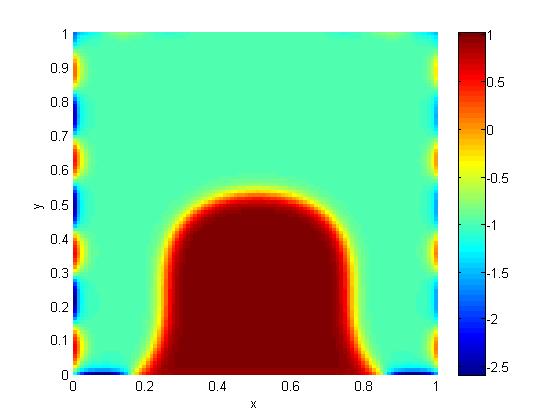}
\includegraphics[scale=0.2]{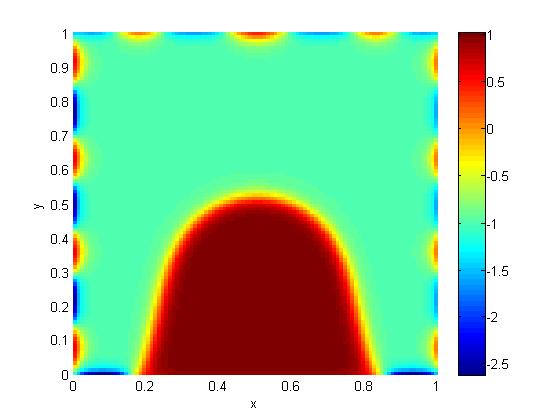}
\includegraphics[scale=0.2]{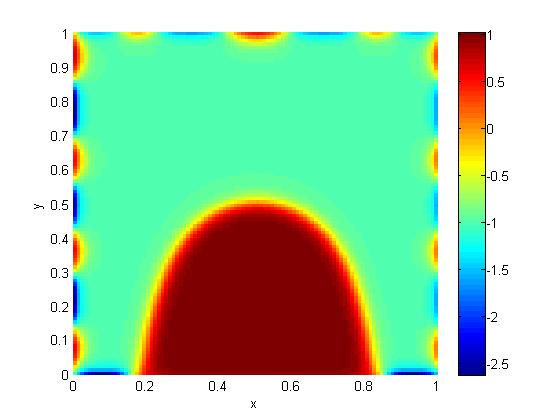}
\includegraphics[scale=0.2]{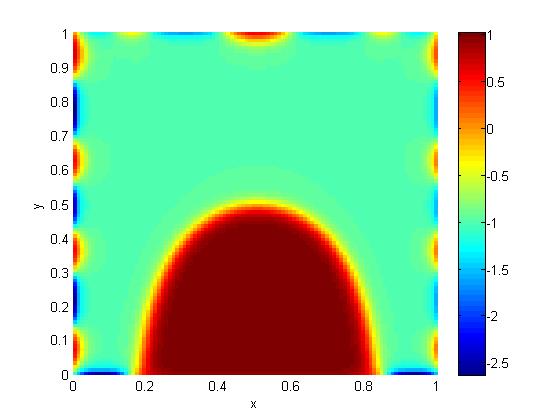}
\caption{Numerical results of Liu-Wu model with the surface potential $G_2$ ($\cos \theta_s=-\frac{1}2$).}
\label{120}
\end{figure}

\begin{figure}
\centering
\includegraphics[scale=0.2]{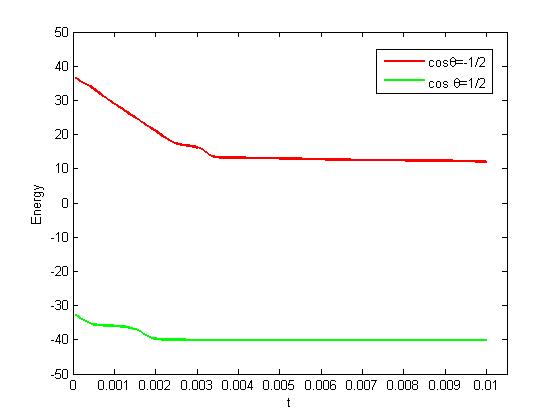}
\includegraphics[scale=0.2]{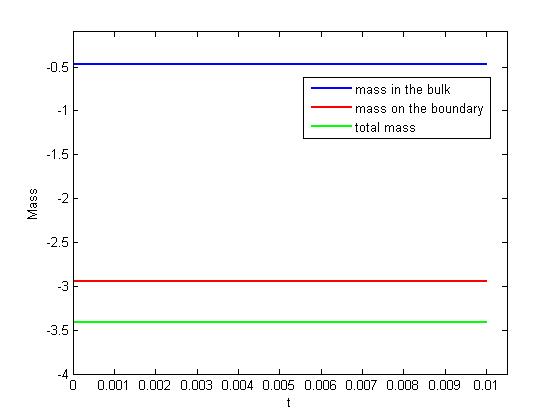}
\includegraphics[scale=0.2]{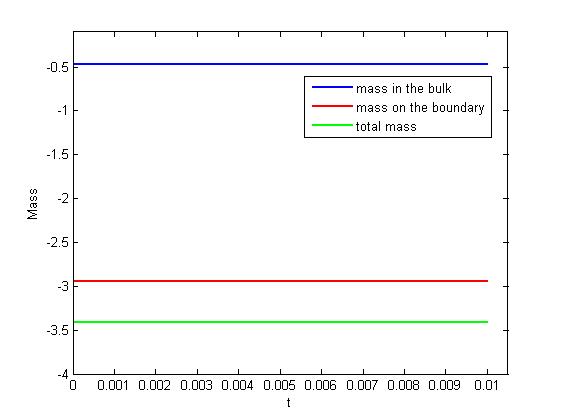}
\caption{Energy evolution of Liu-Wu model with the surface potential $G_2$ (left). Mass evolution of Liu-Wu model with the surface potential $G_2$: $\cos \theta_s=\frac{1}2$ (middle) and $\cos \theta_s=-\frac{1}2$ (right).}
\label{mass_60and120}
\end{figure}

\section{Conclusions}

In the present work, we consider numerical approximations for the Cahn-Hilliard equation with dynamic boundary conditions (C. Liu et. al., Arch. Rational Mech. Anal., 2019).
To solve the model, we develop an efficient scheme based on the stabilized linearly implicit approach, which is  first-order in time, linear and energy stable. The stabilization terms are used to enhance the stability of the scheme.
To the best of our knowledge, this is the first linear and energy stable scheme for solving the Liu-Wu model. The semi-discretized-in-time error estimates for the scheme are also derived.
The energy stability and the accuracy of the developed scheme are demonstrated numerically
by constructing numerical experiments, including the comparison with the former work, accuracy tests with respect to the time step size and the shape deformation of a droplet.

\vspace{1cm}

\begin{center}
Acknowledgment
\end{center}
The authors would like to thank Prof. Chun Liu for some useful discussions on the subject of this article. X. Bao is thankful to Prof. Chun Liu, Prof. Yiwei Wang and Prof. Qing Cheng for some stimulating discussions during the visit of Illinois Institute of Technology.
X. Bao is partially supported by China Scholarship Council (No. 201906040019). H. Zhang was partially supported by the National Natural Science Foundation of China (Nos. 11971002 and 11471046).
%

\end{document}